\DeclareSymbolFontAlphabet{\mathbb}{AMSb}
\DeclareSymbolFontAlphabet{\mathbbl}{bbold}
\newtheorem{thm}{Theorem}[section]
\newtheorem{lem}[thm]{Lemma}
\newtheorem{cor}[thm]{Corollary}
\theoremstyle{definition}
\newtheorem{nota}[thm]{Notation}
\newtheorem{eg}[thm]{Example}
\newtheorem{rem}[thm]{Remark}
\theoremstyle{remarks}
\newtheorem*{rem*}{Remarks}
\newcommand{\sgn}{\mathrm{sgn}}
\newcommand{\sym}[1]{\mathfrak{S}_{#1}}
\newcommand{\tsym}[2]{\mathfrak{S}_{#1}^{+#2}}
\newcommand{\Ind}{\operatorname{\mathrm{Ind}}}
\newcommand{\R}{\mathscr{R}}
\newcommand{\Y}{\mathscr{Y}}
\newcommand{\C}{\mathscr{C}}
\newcommand{\NN}{\mathbb{N}}
\renewcommand{\O}{\mathcal{O}}
\newcommand{\e}{\varepsilon}
\newcommand{\Z}{\mathbb{Z}}
\renewcommand{\P}{\mathscr{P}}
\newcommand{\seq}[1]{\mathbf{#1}}
\newcommand{\Sch}{\mathrm{S}}
\renewcommand{\L}{\mathscr{L}}
\newcommand{\List}{\mathrm{List}}
\newcommand{\m}{{\pmb{\mathscr{M}}}}
\newcommand{\set}[1]{[#1]}
\newcommand{\cont}{\text{{\tiny $\#$}}}
\newcommand{\len}[1]{\wp(#1)}
\newcommand{\power}[1]{\llbracket #1\rrbracket}
\newcommand{\B}{\mathcal{B}}
\newcommand{\CC}{\mathcal{C}}
\newcommand{\D}{\mathcal{D}}
\numberwithin{equation}{section}
\begin{document}
\title[Straightening rule]{Straightening rule for an $m'$-truncated polynomial ring}

\author{Kay Jin Lim}
\address[K. J. Lim]{Division of Mathematical Sciences, Nanyang Technological University, SPMS-PAP-03-01, 21 Nanyang Link, Singapore 637371.}
\email[K. J. Lim]{limkj@ntu.edu.sg}

\begin{abstract} We consider a certain quotient of a polynomial ring categorified by both the isomorphic Green rings of the symmetric groups and Schur algebras generated by the signed Young permutation modules and mixed powers respectively. They have bases parametrised by pairs of partitions whose second partitions are multiples of the odd prime $p$ the characteristic of the underlying field. We provide an explicit formula rewriting a signed Young permutation module (respectively, mixed power) in terms of signed Young permutation modules (respectively, mixed powers) labelled by those pairs of partitions. As a result, for each partition $\lambda$, we discovered the number of compositions $\delta$ such that $\delta$ can be rearranged to $\lambda$ and whose partial sums of $\delta$ are not divisible by $p$.
\end{abstract}	

\subjclass[2010]{05E05, 20C30, 20G43}
\thanks{Supported by Singapore MOE Tier 2 AcRF MOE2015-T2-2-003.}

\maketitle

\section{Introduction}

The ring of symmetric functions plays an important role in both combinatorics and representation theory of the symmetric groups (see \cite[\S I.7]{Mac} and \cite{Knutson}). In the characteristic zero case, the characteristic map gives an isometric isomorphism between the ring of symmetric functions and the Green ring for the symmetric groups where the Schur functions correspond to the Specht modules, the complete and elementary symmetric functions correspond to the trivial and signature representations for the symmetric groups respectively such that multiplication of two symmetric functions corresponds to induction of the outer tensor product of the two respective modules. In the positive characteristic case, the ring of symmetric functions is isomorphic to the Green ring of the symmetric groups generated by Young permutation modules. Let $\Lambda(X),\Lambda(Y)$ be two rings of symmetric functions in the sets of independent countably infinite commuting variables $X,Y$ respectively with coefficients in $\Z$. Fix a positive integer $m$. In this paper, we study a quotient $\Gamma^{(m)}$, depending on $m$, of the ring generated by $\Lambda(X)$ and $\Lambda(Y)$.

Let $k$ be a field of odd characteristic $p$. In \cite{Donkin}, Donkin showed that the isomorphism classes of indecomposable summands of the signed Young permutation modules \[M(\alpha|\beta)=\Ind^{\sym{n}}_{\sym{\alpha}\times\sym{\beta}}(k(\alpha)\boxtimes \sgn(\beta)),\] where $(\alpha|\beta)$ is a pair of compositions such that the sizes of $\alpha$ and $\beta$ sum up to $n$ and $k(\alpha),\sgn(\beta)$ are the trivial and sign $k\sym{\alpha}$- and $k\sym{\beta}$-modules respectively, are parametrised by the set $\P^2_p(n)$ consisting of pairs of partitions of the form $(\lambda|p\mu)$ such that $|\lambda|+p|\mu|=n$. Furthermore, $M(\lambda|p\mu)$ has a distinguished indecomposable summand the signed Young module $Y(\lambda|p\mu)$ with multiplicity one such that any other summand of $M(\lambda|p\mu)$ is isomorphic to $Y(\delta|p\xi)$ for some $(\delta|p\xi)\rhd(\lambda|p\mu)$, here, $\rhd$ is certain dominance order on $\P^2_p(n)$ (see Subsection \ref{SS: 2.3} below). In other words, the Green ring of the symmetric groups generated by signed Young permutation modules, denoted by $\Y(\sym{})$, has $\Z$-bases $\CC=\{[M(\lambda|p\mu)]:(\lambda|p\mu)\in\P^2_p\}$ and $\{[Y(\lambda|p\mu)]:(\lambda|p\mu)\in\P^2_p\}$ where $\P^2_p=\bigcup_{n\in\NN_0}\P^2_p(n)$.

Let $E$ be the natural module for the Schur algebra $S(\infty,1)$. For each pair of compositions $(\alpha|\beta)$, the mixed power is defined as \[K^{\alpha|\beta}E=S^\alpha E\otimes \bigwedge\nolimits^\beta E,\] where $S^\alpha E=S^{\alpha_1}E\otimes\cdots\otimes S^{\alpha_k}E$, $\bigwedge\nolimits^\beta E=\bigwedge\nolimits^{\beta_1}E\otimes\cdots\otimes \bigwedge\nolimits^{\beta_\ell}E$ if $\alpha=(\alpha_1,\ldots,\alpha_k)$ and $\beta=(\beta_1,\ldots,\beta_\ell)$, and $S^rE$ and $\bigwedge\nolimits^r E$ are the $r$th symmetric and exterior powers of $E$ respectively. For each $(\lambda|p\mu)\in \P^2_p$, the mixed power $K^{\lambda|p\mu}E$ has a distinguished indecomposable summand the listing module $\List^{\lambda|p\mu}E$ of multiplicity one such that any other summand of $K^{\lambda|p\mu}E$ is isomorphic to $\List^{\delta|p\xi}E$ for some $(\delta|p\xi)\rhd(\lambda|p\mu)$. Furthermore, any indecomposable summand of $K^{\alpha|\beta}E$ is isomorphic to a listing module. In other words, the Green ring of the Schur algebras generated by mixed powers, denoted by $\L(\Sch)$, has $\Z$-bases $\D=\{[K^{\lambda|p\mu}E]:(\lambda|p\mu)\in\P^2_p\}$ and $\{[\List^{\lambda|p\mu}E]:(\lambda|p\mu)\in\P^2_p\}$. In fact, $\L(\Sch)$ is isomorphic to $\Y(\sym{})$ induced by the Schur functor where $K^{\alpha|\beta}E$ and $\List^{\lambda|p\mu}E$ are mapped to $M(\alpha|\beta)$ and $Y(\lambda|p\mu)$ respectively.

In the classical case, the class of non-isomorphic direct summands of Young permutation $k\sym{n}$-modules are known as Young modules and are labelled by the set of partitions of $n$. In the positive characteristic case, determining the multiplicity of a Young module as a direct summand of a Young permutation module is an open problem (see \cite{Donkin1, Erdmann, JGrab, James, Klyachko}). The numbers are known as the $p$-Kostka numbers. The signed $p$-Kostka numbers are generalisation of the $p$-Kostka numbers which are defined as the multiplicities of signed Young modules as direct summands of signed Young permutation modules (see \cite{GLOW}). Considering writing a signed Young permutation module as $\Z$-linear combination of the signed Young modules in the Green ring $\Y(\sym{})$ is an open problem, in this article, we present an explicit formula writing a signed Young permutation module (respectively, a mixed power) in terms of the basis $\CC$ (respectively, $\D$). The proof relies on the categorification theorem of the quotient ring $\Gamma^{(p)}$ by both $\Y(\sym{})$ and $\L(\Sch)$, proved by Donkin in \cite{Donkin}. Along the way, for each partition $\lambda$, we discovered the number of compositions $\delta$ such that $\delta$ can be rearranged to $\lambda$ and whose partial sums of $\delta$ are not divisible by $p$. These numbers appear, up to signs, as the coefficients in the sum of the sign representations (or the exterior powers) in terms of the basis $\CC$ (or $\D$).

The article is organised as follows. In the next section, we recall the definitions of symmetric functions, the quotient ring $\Gamma^{(m)}$, the Green rings $\Y(\sym{})$ and $\L(\Sch)$. In Section~\ref{S: m'truncation}, we study some properties of $\Gamma^{(m)}$. In Section~\ref{S: straightening rule}, we prove the main result Theorem~\ref{T: straight sym m} and deduce the formulae for writing a signed Young permutation module and a mixed power in terms of the bases $\CC$ and $\D$ respectively. As a consequence of the formulae, for any pair of compositions $(\alpha|\beta)$, we deduce the `canonical' summand of the signed Young permutation module $M(\alpha|\beta)$ (respectively, the mixed power $K^{\alpha|\beta}E$) in Corollary \ref{C: Young distinguished summand}.

\subsection*{Acknowledgement} The author thank the anonymous referees for their comments and suggestions. 

\section{Preliminaries}\label{S: Prelim}

In this section, we fix the notation we shall require throughout in this article and introduce the background material. The standard references are \cite{Donkin, JGreen, GJAK, Mac}.

Let $\Z$ be the set of integers, let $\NN$ be the set of positive integers and let $\NN_0$ be the set of non-negative integers. For a finite set $A$, let $\sym{A}$ denote the symmetric group acting on the set $A$. For each $n\in \NN_0$, let \[\set{n}=\{i\in\NN:1\leq i\leq n\}\] and $\sym{n}=\sym{[n]}$. By convention, $\set{0}=\emptyset$ and $\sym{0}$ is the trivial group.

Let $n\in\NN_0$. A composition $\lambda$ of $n$ is a sequence of positive integers $(\lambda_1,\ldots,\lambda_r)$ such that $\sum^r_{i=1}\lambda_i=n$. In this case, we write $\ell(\lambda)=r$ and $|\lambda|=n$. By convention, the unique composition of $0$ is denoted as $\varnothing$ and $\ell(\varnothing)=0$. The set of all compositions of $n$ is denoted by $\C(n)$ and we write $\C=\bigcup_{n\in\NN_0}\C(n)$. The composition $\lambda$ is called a partition if $\lambda_1\geq\cdots\geq\lambda_r$. We write $\P(n)$ for the set of partitions of $n$ and $\P=\bigcup_{n\in\NN_0}\P(n)$ for the set of all partitions. If the parts of a composition $\delta$ can be rearranged to a partition $\lambda$, we say that $\delta$ has type $\lambda$. In this case, $\lambda$ is uniquely determined by $\delta$.

The concatenation of two compositions $\alpha$ and $\beta$ is the composition \[\alpha\cont\beta=(\alpha_1,\ldots,\alpha_{\ell(\alpha)},\beta_1,\ldots,\beta_{\ell(\beta)}).\] By convention, $\alpha\cont\varnothing=\alpha$ and, similarly, $\varnothing\cont\beta=\beta$. Let $m\in\NN$ and $\mu=(\mu_1,\ldots,\mu_s)$ be a composition. We write $m\mu$ for the composition $(m\mu_1,\ldots,m\mu_s)$ of $m|\mu|$.

The set of all pairs $(\alpha|\beta)$ of compositions of $n$, i.e., $\alpha,\beta$ are compositions and $|\alpha|+|\beta|=n$, is denoted by $\C^2(n)$. We write $\C^2=\bigcup_{n\in\NN_0}\C^2(n)$. Similarly, we write $\P^2(n)$ for the set of all pairs of partitions of $n$ and $\P^2=\bigcup_{n\in\NN_0}\P^2(n)$. For $m\in\NN$, the subset of $\P^2(n)$ consisting of pairs of the form $(\lambda|m\mu)$ is denoted by $\P^2_m(n)$. Furthermore, we write $\P^2_m=\bigcup_{n\in\NN_0}\P^2_m(n)$.

Throughout, $k$ is a field of odd characteristic $p$.

\subsection{Symmetric functions}\label{S: symm func} Let $\Z\power{X}$ be the set of formal power series in the set consisting of countably infinite commuting variables $X=\{x_i:i\in\NN\}$ with coefficients in $\Z$ such that $\deg(x_i)=1$ for all $i\in \NN$. For each $n\in\NN$, the symmetric group $\sym{n}$ acts on $\Z\power{X}$ by permuting the variables $x_1,\ldots,x_n$ naturally, i.e., for each $\sigma\in\sym{n}$ and $f(X)\in\Z\power{X}$, the element $\sigma\cdot f(X)$ is obtained from $f(X)$ by replacing each $x_i$ with $x_{\sigma(i)}$. The element $f(X)$ is called a symmetric function if it is invariant under the actions $\sym{n}$ for all $n\in\NN$.

The set of symmetric functions with degrees bounded above in the set of variables $X$ is denoted by $\Lambda(X)$. Notice that $\Lambda(X)$ is a commutative graded ring where $\Lambda(X)=\bigoplus_{n\in\NN_0}\Lambda^n(X)$ and $\Lambda^n(X)$ consists of homogeneous symmetric functions of degree $n$. In the theory of symmetric function, $\Lambda(X)$ has different $\Z$-bases of special interest. In particular, we are interested in the elementary and complete symmetric functions which we shall now describe.

For any $n\in\NN$, the $n$th elementary symmetric function and the $n$th complete symmetric function are
\begin{align*}
  e_n(X)&=\sum_{i_1< \cdots< i_n}x_{i_1}\cdots x_{i_n},\\
  h_n(X)&=\sum_{i_1\leq \cdots\leq i_n}x_{i_1}\cdots x_{i_n},
\end{align*} respectively. By convention, let $e_0(X)=1=h_0(X)$ and $h_n(X)=0=e_n(X)$ if $n<0$. For each composition $\alpha=(\alpha_1,\ldots,\alpha_r)$, let
\begin{align*}
e_\alpha(X)&=e_{\alpha_1}(X)\cdots e_{\alpha_r}(X),\\
h_\alpha(X)&=h_{\alpha_1}(X)\cdots h_{\alpha_r}(X).
\end{align*} By convention, $e_\varnothing(X)=1=h_\varnothing(X)$. Clearly, if $\alpha$ has type $\lambda$ then $e_\alpha(X)=e_\lambda(X)$ and $h_\alpha(X)=h_\lambda(X)$. The following results are well-known.

\begin{thm}[{\cite[(2.4, $2.6^\prime$, 2.8)]{Mac}}]\label{T: symm function}\
\begin{enumerate}
  \item [(i)] The ring of symmetric functions $\Lambda(X)$ is the polynomial ring in $\{h_n(X):n\in\NN\}$ and $\{e_n(X):n\in \NN\}$ respectively over $\Z$. Furthermore, the subsets $\{e_\lambda(X):\lambda\in\P\}$ and $\{h_\lambda(X):\lambda\in \P\}$ are $\Z$-bases for $\Lambda(X)$.
  \item [(ii)] For each $d\in\NN$, we have $\sum_{i=0}^d(-1)^ih_i(X)e_{d-i}(X)=0$.
  %\item [(iii)] For each $d\in \NN$, we have $e_d(X)=\det(h_{1-i+j}(X))_{1\leq i,j\leq d}$ and $h_d(X)=\det(e_{1-i+j}(X))_{1\leq i,j\leq d}$.
\end{enumerate}
\end{thm}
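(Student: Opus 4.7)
My plan is to treat (ii) first because it is a one-line generating-function identity, and then use it to pass between the $e$-basis and the $h$-basis in (i).

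For (ii), I would introduce the generating series $E(t)=\sum_{n\geq 0}e_n(X)t^n$ and $H(t)=\sum_{n\geq 0}h_n(X)t^n$, viewed as elements of $\Lambda(X)\power{t}$. By expanding directly in $X$ one sees the product formulas
\begin{equation*}
E(t)=\prod_{i\in\NN}(1+x_i t),\qquad H(t)=\prod_{i\in\NN}(1-x_i t)^{-1},
\end{equation*}
which make sense as formal power series because each $x_i$ has positive degree. Replacing $t$ by $-t$ in $E(t)$ and multiplying gives $E(-t)H(t)=\prod_i(1-x_i t)(1-x_i t)^{-1}=1$, equivalently $H(t)E(-t)=1$. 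Extracting the coefficient of $t^d$ for $d\geq 1$ yields the desired identity $\sum_{i=0}^d(-1)^i h_i(X)e_{d-i}(X)=0$.

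For (i), I would take for granted that the monomial symmetric functions $\{m_\lambda(X):\lambda\in\P\}$ form a $\Z$-basis of $\Lambda(X)$ (this is immediate from the definition of a symmetric function, writing any symmetric element uniquely as a sum of its $\sym{}$-orbits of monomials). The strategy is then to show that the transition matrix from $\{e_\lambda\}$ to $\{m_\mu\}$ is unitriangular with respect to the dominance order $\trianglelefteq$ on $\P(n)$ after conjugation. Concretely, expanding the product $e_{\lambda_1}(X)\cdots e_{\lambda_r}(X)$ as a sum of monomials and grouping into $\sym{}$-orbits, one verifies combinatorially by a column-reading argument on the Young diagram of $\lambda'$ that
\begin{equation*}
e_\lambda(X)=m_{\lambda'}(X)+\sum_{\mu\triangleright\lambda'}a_{\lambda\mu}\,m_\mu(X),
\end{equation*}
where $\lambda'$ denotes the conjugate partition and $a_{\lambda\mu}\in\NN_0$. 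Since conjugation is a bijective involution on $\P(n)$ that reverses dominance, this transition matrix is unitriangular over $\Z$ on each homogeneous piece $\Lambda^n(X)$, so $\{e_\lambda:\lambda\in\P\}$ is a $\Z$-basis, and consequently $\Lambda(X)$ is the polynomial ring $\Z[e_1,e_2,\ldots]$.

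Finally, the analogous statement for the complete symmetric functions follows from (ii). The identity there shows recursively that each $h_d$ lies in $\Z[e_1,\ldots,e_d]$ and, by inversion, each $e_d$ lies in $\Z[h_1,\ldots,h_d]$, so the algebra involution $\omega:\Lambda(X)\to\Lambda(X)$ defined on generators by $\omega(e_n)=h_n$ is well defined and bijective over $\Z$. Applying $\omega$ to the $e$-basis result transports it to $\{h_\lambda:\lambda\in\P\}$. I expect the only real work in the plan to be the combinatorial verification of the unitriangular expansion of $e_\lambda$ in the monomial basis; the generating-function and inversion steps are then automatic.
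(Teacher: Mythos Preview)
The paper does not supply its own proof of this theorem; it is quoted verbatim from Macdonald \cite[(2.4, $2.6'$, 2.8)]{Mac} and used as background. Your plan is precisely the standard argument given there (generating functions for (ii), unitriangularity of the $e_\lambda$ against the monomial basis for (i), and the involution $\omega$ to transport to the $h$-side), so in that sense it agrees with the source the paper defers to.

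One small slip to fix: in your triangular expansion the inequality points the wrong way. The correct statement is
\[
e_\lambda(X)=m_{\lambda'}(X)+\sum_{\mu\triangleleft\lambda'}a_{\lambda\mu}\,m_\mu(X),
\]
i.e.\ the remaining monomial terms are strictly \emph{dominated} by $\lambda'$, not strictly dominating it. For instance $e_{(1,1)}=e_1^2=m_{(2)}+2m_{(1,1)}$, and $(1,1)\triangleleft(2)$. With this correction the unitriangularity argument on each $\Lambda^n(X)$ goes through exactly as you describe, and the rest of your outline (the product formula $H(t)E(-t)=1$ and the symmetry of the resulting recursion showing $\omega^2=\mathrm{id}$) is correct.
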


\subsection{An $m'$-truncated polynomial ring $\Gamma^{(m)}$} Let $X=\{x_i:i\in \NN\}$ and $Y=\{y_j:j\in\NN\}$ be two sets of independent countably infinite commuting variables such that $\deg(x_i)=1=\deg(y_i)$ for all $i\in\NN$. Clearly, $\Lambda(X)$ and $\Lambda(Y)$ are graded subrings of $\Z\power{X\cup Y}$. Let $\langle \Lambda(X),\Lambda(Y)\rangle $ be the subring of $\Z\power{X\cup Y}$ generated by $\Lambda(X)$ and $\Lambda(Y)$. By Theorem \ref{T: symm function}(i), it is clear that $\langle \Lambda(X),\Lambda(Y)\rangle$ is a polynomial ring in $\{h_i(X),e_j(Y):i,j\in\NN\}$ over $\Z$, has a $\Z$-basis $\{h_\lambda(X)e_\mu(Y):(\lambda|\mu)\in\P^2\}$ and its $n$th component has a $\Z$-basis $\{h_\lambda(X)e_\mu(Y):(\lambda|\mu)\in\P^2(n)\}$.

%Notice that $\langle \Lambda(X),\Lambda(Y)\rangle$ is strictly larger than $\Lambda(X\cup Y)$ since, for example, the former contains both $h_1$ and $f_1$ but $\Lambda(\b{x},\b{y})^1$ has $\Z$-basis $\{h_1+f_1\}$.

Fix a positive integer $m$. We define the quotient \[\Gamma^{(m)}=\langle \Lambda(X),\Lambda(Y)\rangle/I\] where $I$ is the ideal of $\langle \Lambda(X),\Lambda(Y)\rangle$ generated by $\sum_{i=0}^d(-1)^ih_i(X)e_{d-i}(Y)$ for every positive integer $d$ such that $m\nmid d$. For each $i,j\in \NN_0$ and $\alpha,\beta\in \C$, we denote
\begin{align*}
h_i&=h_i(X)+I,& e_j&=e_j(Y)+I,&
h_\alpha&=h_\alpha(X)+I,& e_\beta&=e_\beta(Y)+I.
\end{align*} We call $\Gamma^{(m)}$ the ring of $m'$-truncated symmetric functions in the variables $X\cup Y$.

Since $I$ is a homogeneous ideal of $\langle \Lambda(X),\Lambda(Y)\rangle $, the ring $\Gamma^{(m)}$ is graded with the $n$th component as \[(\Gamma^{(m)})^n=(\langle \Lambda(X),\Lambda(Y)\rangle ^n+I)/I\] consisting of the $\Z$-linear combinations of the $m'$-truncated symmetric functions $h_\alpha e_\beta$ such that $|\alpha|+|\beta|=n$, together with the zero element.

\begin{thm}[{\cite[\S4.1(10,11)]{Donkin}}]\label{T: Gamma ring} The ring $\Gamma^{(m)}$ is the polynomial ring in $\{h_i,e_{jm}:i,j\in\NN\}$ and it has a $\Z$-basis \[\B=\{h_\lambda e_{m\mu}:(\lambda|m\mu)\in\P^2_m\}.\] Furthermore, there is a graded involution $\psi:\Gamma^{(m)}\to \Gamma^{(m)}$ given by $\psi(h_i)=e_i$.
\end{thm}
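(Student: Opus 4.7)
The plan is to exhibit mutually inverse ring homomorphisms between $\Gamma^{(m)}$ and the auxiliary polynomial ring $R = \Z[H_i, E_{jm} : i, j \in \NN]$, from which both the polynomial structure in (i) and the basis $\B$ fall out directly: the monomial $\Z$-basis of $R$ is indexed by $\P^2_m$ via $\prod_i H_i^{a_i} \prod_j E_{jm}^{b_j} \leftrightarrow (\lambda|m\mu)$, where $\lambda$ has $a_i$ parts equal to $i$ and $\mu$ has $b_j$ parts equal to $j$. One of the two maps is immediate: using Theorem~\ref{T: symm function}(i) to identify $\langle \Lambda(X),\Lambda(Y)\rangle$ with $\Z[h_i(X), e_j(Y)]$, the composition $R \hookrightarrow \langle\Lambda(X),\Lambda(Y)\rangle \twoheadrightarrow \Gamma^{(m)}$ gives a ring homomorphism $\iota$ sending $H_i \mapsto h_i$ and $E_{jm} \mapsto e_{jm}$.

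For the inverse, I would define elements $P_d \in R$ recursively by $P_0 = 1$, $P_d = E_d$ when $m \mid d$, and, when $m \nmid d$,
\[ P_d = \sum_{i=1}^{d} (-1)^{i+1} H_i P_{d-i}, \]
which is precisely what one obtains by solving the relation $\sum_{i=0}^d (-1)^i h_i(X) e_{d-i}(Y) = 0$ for its top term $e_d(Y)$. The ring map $\tilde\pi : \langle\Lambda(X),\Lambda(Y)\rangle \to R$ determined by $h_i(X) \mapsto H_i$ and $e_j(Y) \mapsto P_j$ then annihilates each generator of $I$ by construction, so it descends to $\pi : \Gamma^{(m)} \to R$. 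The composition $\pi \circ \iota$ is the identity on the free generators of $R$, while $\iota \circ \pi$ is the identity on the generators $h_i$ and $e_d$ of $\Gamma^{(m)}$ by a short induction on $d$ using the relation $\sum_{i=0}^d (-1)^i h_i e_{d-i} = 0$ valid in $\Gamma^{(m)}$ whenever $m \nmid d$. This establishes the isomorphism $\iota$, hence both the polynomial ring statement in (i) and the basis $\B$.

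For the involution, let $\tilde\psi$ be the degree-preserving ring automorphism of $\langle\Lambda(X),\Lambda(Y)\rangle$ of order two swapping $h_i(X) \leftrightarrow e_i(Y)$. Re-indexing via $j = d - i$ yields
\[ \tilde\psi\!\left(\sum_{i=0}^d (-1)^i h_i(X) e_{d-i}(Y)\right) = \sum_{j=0}^d (-1)^{d-j} h_j(X) e_{d-j}(Y) = (-1)^d \sum_{j=0}^d (-1)^j h_j(X) e_{d-j}(Y), \]
so $\tilde\psi$ stabilises $I$ and descends to a graded involution $\psi$ of $\Gamma^{(m)}$ with $\psi(h_i) = e_i$. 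The main obstacle in the whole proof is the construction of $\pi$: once the recursive polynomials $P_d$ are in place the remaining tasks (well-definedness of $\tilde\pi$, the two identities on generators, and stability of $I$ under $\tilde\psi$) are formal verifications.
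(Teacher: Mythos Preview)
The paper does not supply its own proof of this theorem: it is quoted directly from Donkin \cite[\S4.1(10,11)]{Donkin} and used as a black box thereafter. Consequently there is no in-paper argument to compare your proposal against.

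That said, your argument is correct and self-contained. The construction of the inverse $\pi$ via the recursively defined elements $P_d\in R$ is the natural way to verify that the quotient $\Gamma^{(m)}$ is free polynomial on $\{h_i,e_{jm}\}$: the recursion is forced by solving the defining relation of $I$ for its term $e_d$, and the two composite identities on generators follow exactly as you indicate (the induction for $\iota\circ\pi(e_d)=e_d$ when $m\nmid d$ uses precisely the relation $\sum_{i=0}^d(-1)^ih_ie_{d-i}=0$ that holds in $\Gamma^{(m)}$). The involution check via the sign $(-1)^d$ is also fine. One minor cosmetic point: you might remark explicitly that $\tilde\psi$ is well defined because $\{h_i(X),e_j(Y):i,j\in\NN\}$ are algebraically independent generators of $\langle\Lambda(X),\Lambda(Y)\rangle$ (this is Theorem~\ref{T: symm function}(i) applied to each factor), but you do allude to this in setting up $\iota$.
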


\subsection{Signed Young permutation module}\label{SS: 2.3} The general theory of the representation theory of the symmetric group can be found, for example, in \cite{GJAK}.

Let $m,n\in\NN_0$. We denote $\tsym{n}{m}$ for the symmetric group acting on the set $\{i+m:i\in\set{n}\}$ consisting of permutations of the forms $\tau^{+m}$ where $\tau\in\sym{n}$ such that, for all $i\in\set{n}$, \[\tau^{+m}(i+m)=\tau(i)+m.\] Furthermore, we identify $\sym{m}\times \sym{n}$ with the subgroup $\sym{m}\tsym{n}{m}$ of $\sym{m+n}$. For a given composition $\alpha=(\alpha_1,\ldots,\alpha_r)$ of $n$, we denote the Young subgroup of $\sym{n}$ with respect to $\alpha$ by \[\sym{\alpha}=\sym{\alpha_1}\times\cdots\times\sym{\alpha_r}.\]

For each $n\in\NN_0$, the set $\P^2_p(n)$ is partially ordered by the dominance order $\unrhd$ where $(\lambda|p\mu)\unrhd (\delta|p\xi)$ if
\begin{enumerate}[(a)]
  \item $\displaystyle{\sum^\ell_{i=1}\lambda_i\geq \sum^\ell_{i=1}\delta_i}$, and
  \item $\displaystyle{|\lambda|+p\sum^\ell_{i=1}\mu_i\geq |\delta|+p\sum^\ell_{i=1}\xi_i}$,
\end{enumerate} for all $\ell\in\NN$, where $\lambda_i$ is treated as $0$ if $i>\ell(\lambda)$ and similarly for $\mu_i$, $\delta_i$ and $\xi_i$. In the case when $(\lambda|p\mu)\unrhd (\delta|p\xi)$ and $(\lambda|p\mu)\neq (\delta|p\xi)$, we write $(\lambda|p\mu)\rhd(\delta|p\xi)$.

Recall that $k$ is a field of odd characteristic $p$. In this article, we denote $\otimes$ and $\boxtimes$ for the tensor product and outer tensor product of modules over $k$.

We write $k(n)$ and $\sgn(n)$ for the trivial and signature representations for $k\sym{n}$. For a composition $\alpha\in\C(n)$, $k(\alpha)$ and $\sgn(\alpha)$ are the restrictions of $k(n)$ and $\sgn(n)$ respectively to the Young subgroup $\sym{\alpha}$. Let $(\alpha|\beta)\in\C^2(n)$. We define the signed Young permutation $k\sym{n}$-module $M(\alpha|\beta)$ with respect to $(\alpha|\beta)$ as \[M(\alpha|\beta)=\Ind^{\sym{n}}_{\sym{\alpha}\times\sym{\beta}}(k(\alpha)\boxtimes \sgn(\beta)).\] Following the work of Donkin \cite[\S2.3]{Donkin}, we know that all non-isomorphic indecomposable summands of the signed Young permutation $k\sym{n}$-modules are parametrised by the set $\P^2_p(n)$. They are called the signed Young $k\sym{n}$-modules and denoted as $Y(\lambda|p\mu)$ such that, for each $(\lambda|p\mu)\in\P^2_p(n)$, $Y(\lambda|p\mu)$ is a summand of $M(\lambda|p\mu)$ with multiplicity one and the remaining summands of $M(\lambda|p\mu)$ are isomorphic to $Y(\delta|p\theta)$ for some $(\delta|p\theta)\in\P^2_p(n)$ and $(\delta|p\theta)\rhd(\lambda|p\mu)$.

Let $G$ be a group and $\R(kG)$ be the Green ring (or representation ring) of $kG$ consisting of formal linear combinations of the isomorphism classes $[V]$ of finite dimensional indecomposable $kG$-modules $V$ over $\Z$ such that $[V]=[W]$ if and only if $V\cong W$, $[V]+[W]=[V\oplus W]$ and the (inner) product given by $[V]\cdot[W]=[V\otimes W]$. The contragradient dual induces an automorphism on $\R(kG)$ since $(V\oplus W)^*\cong V^*\oplus W^*$ and $(V\otimes W)^*\cong V^*\otimes W^*$.

For each $n\in \NN_0$, we have the Green ring $\R(k\sym{n})$ where, by convention, $\R(k\sym{0})\cong \Z$ has a $\Z$-basis consisting only the trivial $k\sym{0}$-module. Let $V$ and $W$ be $k\sym{m}$- and $k\sym{n}$-modules respectively. We obtain a $k\sym{m+n}$-module given by $\Ind^{\sym{m+n}}_{\sym{m}\times \sym{n}} (V\boxtimes W)$. It is easily checked that $\Ind^{\sym{m+n}}_{\sym{m}\times \sym{n}} (V\boxtimes W)\cong \Ind^{\sym{m+n}}_{\sym{n}\times \sym{m}}(W\boxtimes V)$.  Let $\R_k(\sym{})=\bigoplus_{n\in\NN_0}\R(k\sym{n})$. We define an outer product on $\R_k(\sym{})$ by \[[V]\times [W]=[\Ind^{\sym{m+n}}_{\sym{m}\times \sym{n}} (V\boxtimes W)],\] and hence $[V]\times [W]=[W]\times [V]$. Then $\R_k(\sym{})$ forms a ring under the addition and outer product.

Since $[M(\alpha|\beta)]=[M(\lambda|\mu)]$ when $\alpha,\beta$ have types $\lambda,\mu$ respectively and $[M(\alpha|\beta)]\times [M(\gamma|\delta)]=[M(\alpha\cont\gamma|\beta\cont\delta)]$, the subset $\Y(\sym{})$ of  $\R_k(\sym{})$ spanned by the isomorphism classes of indecomposable summands of signed Young permutation modules is a graded subring where \[\Y(\sym{})=\bigoplus_{n\in\NN_0} \Y(\sym{n})\] and $\Y(\sym{n})$ is spanned by $[M(\alpha|\beta)]$ one for each $(\alpha|\beta)\in\P^2(n)$. % Since the signed Young permutation modules are self-dual, the signed Young modules are self-dual. Therefore, the contragradient dual is identity on $\Y(\sym{})$.

%Let \[\R_k(\sym{})=\bigoplus_{n\in \NN} \R(k\sym{n})\] be the commutative graded ring with addition $[V]+[W]=[V\oplus W]$ if $V,W\in\R(k\sym{n})$, product the outer tensor product $[V]\times [W]$ and extended bilinearly to $\R_k(\sym{})$. Since $[M(\alpha|\beta)]\times [M(\gamma|\delta)]=[M(\alpha\cont\gamma|\beta\cont\delta)]$, the subset $\Y(\sym{})$ of  $\R_k(\sym{})$ consisting of the isomorphism classes of indecomposable summands of signed Young permutation modules is a graded subring. Let $\Y(\sym{n})=\Y(\sym{})\cap \R(k\sym{n})$ so that \[\Y(\sym{})=\bigoplus_{n\in\NN} \Y(\sym{n}).\] Since the signed Young permutation modules are self-dual, the signed Young modules are self-dual. Therefore, the contragradient dual is identity on $\Y(\sym{})$.

\begin{thm}[{\cite[2.3(6--8)]{Donkin}}]\label{T: symmetric ring} Both $\{[Y(\lambda|p\mu)]:(\lambda|p\mu)\in\P^2_p\}$ and \[\CC=\{[M(\lambda|p\mu)]:(\lambda|p\mu)\in\P^2_p\}\] are $\Z$-bases for $\Y(\sym{})$ and there is a graded involution $\varpi:\Y(\sym{})\to\Y(\sym{})$ defined by $\varpi([V])=[V\otimes\sgn(n)]$ for each $[V]\in\Y(\sym{n})$.
\end{thm}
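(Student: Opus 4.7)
The plan is to establish the two basis assertions via Krull--Schmidt together with a unitriangular change-of-basis argument, and then to construct the involution on $\R_k(\sym{})$ and show it restricts to $\Y(\sym{})$.

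First, recall that by Donkin's parametrisation recalled in Subsection~\ref{SS: 2.3}, every indecomposable summand of every $M(\alpha|\beta)$ is of the form $Y(\lambda|p\mu)$ for a unique $(\lambda|p\mu)\in\P^2_p$, and every such $Y(\lambda|p\mu)$ appears as a summand of $M(\lambda|p\mu)$ (multiplicity one). Hence $\Y(\sym{})$, being by definition the $\Z$-span in $\R_k(\sym{})$ of the classes of these summands, admits $\{[Y(\lambda|p\mu)]:(\lambda|p\mu)\in\P^2_p\}$ as a spanning set; linear independence follows from Krull--Schmidt applied in each homogeneous piece $\R(k\sym{n})$, since distinct $(\lambda|p\mu)$ give non-isomorphic indecomposables.

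For the basis $\CC$, I would use the triangularity statement already recorded: in each $\Y(\sym{n})$,
\[
  [M(\lambda|p\mu)]=[Y(\lambda|p\mu)]+\sum_{(\delta|p\theta)\rhd(\lambda|p\mu)} a_{\lambda\mu}^{\delta\theta}\,[Y(\delta|p\theta)]
\]
with $a_{\lambda\mu}^{\delta\theta}\in\NN_0$. Totally ordering $\P^2_p(n)$ refining $\rhd$, the transition matrix from the $Y$-basis to $\CC$ is upper unitriangular and therefore invertible over $\Z$. Consequently $\CC$ is also a $\Z$-basis of $\Y(\sym{})$.

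For the involution, I would first define $\varpi\colon \R_k(\sym{})\to \R_k(\sym{})$ by $\varpi([V])=[V\otimes \sgn(n)]$ for $[V]\in\R(k\sym{n})$. Well-definedness and additivity follow from $V\cong W\Rightarrow V\otimes\sgn(n)\cong W\otimes\sgn(n)$ and from $(V\oplus W)\otimes\sgn(n)\cong(V\otimes\sgn(n))\oplus(W\otimes\sgn(n))$. It is an involution because $\sgn(n)\otimes\sgn(n)\cong k(n)$, and grade-preserving by construction. To check multiplicativity with respect to the outer product, one uses
\[
  \Ind^{\sym{m+n}}_{\sym{m}\times\sym{n}}(V\boxtimes W)\otimes\sgn(m+n)\ \cong\ \Ind^{\sym{m+n}}_{\sym{m}\times\sym{n}}\bigl((V\otimes\sgn(m))\boxtimes(W\otimes\sgn(n))\bigr),
\]
which is the standard tensor identity combined with the fact that $\sgn(m+n)$ restricts to $\sgn(m)\boxtimes\sgn(n)$ on $\sym{m}\times\sym{n}$. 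Finally, $\varpi$ stabilises $\Y(\sym{})$ since, using $k(\alpha)\otimes\sgn(\alpha)\cong\sgn(\alpha)$ and $\sgn(\beta)\otimes\sgn(\beta)\cong k(\beta)$ on the respective Young subgroups, the same tensor-induction identity yields $M(\alpha|\beta)\otimes\sgn(|\alpha|+|\beta|)\cong M(\beta|\alpha)$; hence $\varpi$ permutes the generators of $\Y(\sym{})$, making it a graded ring involution of $\Y(\sym{})$.

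The only genuinely non-formal step is the compatibility $\Ind(V\boxtimes W)\otimes\sgn\cong \Ind((V\otimes\sgn)\boxtimes(W\otimes\sgn))$, which I expect to be the main (but still routine) obstacle; it is a direct application of the tensor identity for induction together with the restriction of $\sgn(m+n)$ to $\sym{m}\times\sym{n}$. Everything else is Krull--Schmidt and unitriangular linear algebra.
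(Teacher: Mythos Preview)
The paper does not prove this theorem; it is stated with a citation to Donkin \cite[2.3(6--8)]{Donkin} and used as a black box. Your proposal supplies a correct and self-contained argument: Krull--Schmidt gives the $Y$-basis, the unitriangularity recorded in Subsection~\ref{SS: 2.3} yields the $\CC$-basis, and the tensor identity $\Ind_H^G(U)\otimes W\cong\Ind_H^G(U\otimes\Res^G_H W)$ together with $\Res^{\sym{m+n}}_{\sym{m}\times\sym{n}}\sgn(m+n)\cong\sgn(m)\boxtimes\sgn(n)$ handles multiplicativity of $\varpi$ and the stability $M(\alpha|\beta)\otimes\sgn\cong M(\beta|\alpha)$. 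This is exactly the standard route one would take and matches what Donkin does in the cited reference; there is nothing to compare against in the present paper itself.
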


\subsection{Listing module} We refer the reader to \cite{JGreen} for the classical theory of the Schur algebras.

Fix $r\in\NN$ and let $\Omega$ be a set. The symmetric group $\sym{r}$ acts on $\Omega^r$ by place permutation and hence it induces a diagonal action on the set $\Omega^r\times \Omega^r$. The set of orbits of the action of $\sym{r}$ on $\Omega^r\times \Omega^r$ is denoted by $\O(\Omega^r)$. Let $f,g,h\in \O(\Omega^r)$, $(\seq{p},\seq{q})\in f$ and \[\lambda^f_{g,h}=|\{\seq{j}\in\Omega^r:\text{$(\seq{p},\seq{j})\in g$ and $(\seq{j},\seq{q})\in h$}\}|.\] Let $k(\Omega^r)$ be the formal vector space over $k$ with a basis $\{\xi_f:f\in \O(\Omega^r)\}$ endowed with a multiplication defined as \[\xi_g\xi_h=\sum_{f\in\O(\Omega^r)}(\lambda^f_{g,h}\cdot 1_k) \xi_f.\] Notice that $S(n,r)=k(\set{n}^r)$ is the classical Schur algebra. We denote the algebra $k(\NN^r)$ by $\Sch(\infty,r)$. By convention, we set $\Sch(\infty,0)=k\xi_0\cong k$. Furthermore, we denote \[\Sch(\infty)=\bigoplus_{r\in\NN_0} \Sch(\infty,r).\]

The algebra $\Sch(\infty)$ is a bialgebra as follows. Let $(\seq{i},\seq{j})\in \NN^r\times \NN^r$ and $(\seq{p},\seq{q})\in \NN^s\times \NN^s$ and $f,g$ be the orbits containing them, respectively. We denote the orbit containing $(\seq{i}\cont\seq{p},\seq{j}\cont\seq{q})\in\NN^{r+s}\times \NN^{r+s}$ as $f\cont g$. Define the comultiplication $\Delta:\Sch(\infty)\to \Sch(\infty)\otimes\Sch(\infty)$ and counit $\e:\Sch(\infty)\to k$ as
\begin{align*}
\Delta(\xi_f)&=\sum_{\substack{g\in\O(\NN^r),\ h\in\O(\NN^s)\\ f=g\cont h}} \xi_g\otimes \xi_h,& \e(\xi_f)&=\left \{\begin{array}{ll} 1&f=0,\\ 0&\text{otherwise.}\end{array}\right .
\end{align*} If $V,W$ are $\Sch(\infty,r)$- and $\Sch(\infty,s)$-modules respectively, then $V\otimes W$ is an $\Sch(\infty,r+s)$-module. %The bialgebra structure allows one to define the tensor product $V\otimes W$ of two $S(\infty)$-modules. %Furthermore, since $\Delta(\xi_\alpha)=\sum_{\beta+\gamma}\xi_\beta\otimes \xi_\gamma$, we have that $V\otimes_k W$ is admissible if both $V,W$ are admissible $\Sch(\infty)$-modules. Let $\R(\Sch(\infty))$be the Green ring of $\Sch(\infty)$ consisting of formal linear combination of the isomorphism classes $[V]$ of indecomposable admissible $\Sch(\infty)$-modules over $\Z$ of admissible $\Sch(\infty)$-modules with $[V]=[W]$ if and only if $V\cong W$, $[V]+[W]=[V\oplus W]$ and $[V]\times [W]=\sum a_U [U]$ where $U$ is the direct summand of $V\otimes_k W$ with multiplicity $a_U$.

Consider the natural $\Sch(\infty,1)$-module $E$ where $E$ is the $k$-vector space with a basis $\{e_i:i\in\NN\}$ and the action defined by \[\xi_f e_j=\left \{\begin{array}{ll}e_i&\text{if $f=\{(i,j)\}$,}\\ 0&\text{otherwise,}\end{array}\right .\] for all $j\in \NN$ and $f\in\O(\NN)$. For each $r\in\NN$, let $S^rE$ and $\bigwedge\nolimits^rE$ be the symmetric and exterior powers of $E$ respectively. By convention, $S^0E\cong k\cong \bigwedge\nolimits^0E$. For $\alpha\in\C(n)$, we set
\begin{align*}
S^\alpha E&=S^{\alpha_1}E\otimes\cdots\otimes S^{\alpha_{\ell(\alpha)}}E,& \bigwedge\nolimits^\alpha E&=\bigwedge\nolimits^{\alpha_1}E\otimes\cdots\otimes \bigwedge\nolimits^{\alpha_{\ell(\alpha)}}E,
\end{align*} so that $S^\alpha E$ and $\bigwedge\nolimits^\alpha E$ are $S(\infty,n)$-modules. For $(\alpha|\beta)\in \C^2(n)$, we define the mixed power the $\Sch(\infty,n)$-module $K^{\alpha|\beta}E:=S^\alpha E\otimes \bigwedge\nolimits^\beta E$. Notice that $K^{\delta|\xi}E\cong K^{\alpha|\beta}E$ if $\alpha,\beta$ have types $\delta,\xi$ respectively. Following Donkin \cite[\S4]{Donkin}, the class of non-isomorphic indecomposable summands of mixed powers for $\Sch(\infty,n)$ are parametrised by the set $\P^2_p(n)$. They are called the listing modules denoted by $\List^{\lambda|p\mu}E$ such that, for each $(\lambda|p\mu)\in\P^2_p(n)$, $\List^{\lambda|p\mu}E$ is a summand of $K^{\lambda|p\mu}E$ with multiplicity one and any other summand of $K^{\lambda|p\mu}E$ are isomorphic to $\List^{\delta|p\theta}E$ for some $(\delta|p\theta)\in\P^2_p(n)$ and $(\delta|p\theta)\rhd(\lambda|p\mu)$. Furthermore, under the Schur functor $f$, we have $f(K^{\alpha|\beta}E)\cong M(\alpha|\beta)$ and $f(\List^{\lambda|p\mu}E)\cong Y(\lambda|p\mu)$.

Let $\L(\Sch)$ be the graded subring of the representation ring of $\Sch(\infty)$ generated by $[S^nE]$ and $[\bigwedge\nolimits^n E]$ for every $n\in\NN_0$ with the gradation \[\L(\Sch)=\bigoplus_{n\in\NN_0} \L(\Sch(\infty,n))\] where $\L(\Sch(\infty,n))$ is spanned by the mixed powers $[K^{\alpha|\beta}E]$ such that $(\alpha|\beta)\in\C^2(n)$.

\begin{thm}\label{T: Schur ring} Both $\{[\List^{\lambda|p\mu}E]:(\lambda|p\mu)\in\P^2_p\}$ and \[\D=\{[K^{\lambda|p\mu}E]:(\lambda|p\mu)\in\P^2_p\}\] are $\Z$-bases for $\L(\Sch)$. Furthermore, the ring $\L(\Sch)$ has a graded involution $\omega:\L(\Sch)\to\L(\Sch)$ defined by $\omega(S^nE)=\bigwedge\nolimits^nE$.
\end{thm}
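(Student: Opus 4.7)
The plan is to mirror the proof of Theorem~\ref{T: symmetric ring}, exploiting the structural results on listing modules attributed to Donkin and then transferring the involution across the Schur functor.

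For the basis assertions, first I would show that $\{[\List^{\lambda|p\mu}E]:(\lambda|p\mu)\in\P^2_p\}$ is a $\Z$-basis. By definition $\L(\Sch)$ is generated by $[S^n E]$ and $[\bigwedge\nolimits^n E]$ for $n\in\NN$, so every element is a $\Z$-linear combination of products $[K^{\alpha|\beta}E]=[S^\alpha E]\cdot[\bigwedge\nolimits^\beta E]$ with $(\alpha|\beta)\in\C^2$. Since $K^{\alpha|\beta}E\cong K^{\lambda|\mu}E$ whenever $\alpha,\beta$ have types $\lambda,\mu$, we may take $(\lambda|\mu)\in\P^2$. Because every indecomposable summand of every mixed power is (by the cited results of Donkin) a listing module indexed by $\P^2_p$, each $[K^{\lambda|\mu}E]$ expands as a non-negative integer combination of $[\List^{\delta|p\theta}E]$. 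Thus the listing classes span $\L(\Sch)$; they are also $\Z$-linearly independent, as classes of pairwise non-isomorphic indecomposable modules in any Green ring always are by Krull--Schmidt.

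For the basis $\D$, the distinguished-summand property supplies
\[
[K^{\lambda|p\mu}E] = [\List^{\lambda|p\mu}E] + \sum_{(\delta|p\theta)\rhd(\lambda|p\mu)} a^{\delta\theta}_{\lambda\mu}\,[\List^{\delta|p\theta}E],
\]
with $a^{\delta\theta}_{\lambda\mu}\in\NN_0$. Ordering $\P^2_p$ compatibly with $\unrhd$ makes the transition matrix from $\D$ to the listing basis upper unitriangular with integer entries, hence invertible over $\Z$, so $\D$ is also a $\Z$-basis.

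For the involution, the cleanest route is to transport $\varpi$ across the Schur functor. The functor $f$ sends $K^{\alpha|\beta}E$ to $M(\alpha|\beta)$ and $\List^{\lambda|p\mu}E$ to $Y(\lambda|p\mu)$, compatibly with the outer products on both sides (the coproduct of $\Sch(\infty)$ on the Schur-algebra side, and induction along $\sym{r}\times\sym{s}\hookrightarrow\sym{r+s}$ on the symmetric-group side). This yields a graded ring isomorphism $\L(\Sch)\xrightarrow{\sim}\Y(\sym{})$ sending $\D$ to $\CC$, and I would then set $\omega:=f^{-1}\circ\varpi\circ f$. On generators $f([S^nE])=[M((n)|\varnothing)]=[k(n)]$ and $f([\bigwedge\nolimits^nE])=[M(\varnothing|(n))]=[\sgn(n)]$, and since $\varpi([k(n)])=[k(n)\otimes\sgn(n)]=[\sgn(n)]$ we recover $\omega([S^nE])=[\bigwedge\nolimits^nE]$ as required.

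The main obstacle I foresee is the verification that the Schur functor really induces a ring isomorphism at the Green-ring level: one must match the two different outer operations, namely the coproduct of $\Sch(\infty)$ against induction from $\sym{r}\times\sym{s}$ to $\sym{r+s}$, and check this at the level of isomorphism classes on indecomposable summands of mixed powers. A purely internal alternative would be to exhibit a presentation of $\L(\Sch)$ paralleling Theorem~\ref{T: Gamma ring} and observe that the defining Koszul-type relations $\sum_{i=0}^d(-1)^i[S^iE][\bigwedge\nolimits^{d-i}E]=0$ (for $p\nmid d$) are symmetric under the swap $[S^iE]\leftrightarrow[\bigwedge\nolimits^iE]$, so the swap extends to a well-defined ring involution.
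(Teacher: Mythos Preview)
The paper does not supply a proof of this theorem: it is stated without argument and without explicit citation, as the Schur-algebra analogue of Theorem~\ref{T: symmetric ring} (which \emph{is} cited from Donkin). So there is no ``paper's own proof'' to compare against; your proposal is effectively filling in what the author leaves implicit from the preceding discussion of listing modules and the subsequent Theorem~\ref{T: Donkin decategorification}.

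Your basis arguments are correct and are exactly the standard ones: spanning by listing modules follows from the cited decomposition of mixed powers, linear independence from Krull--Schmidt, and the passage to $\D$ via the unitriangular change-of-basis coming from the distinguished-summand property.

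For the involution, your instinct to transport $\varpi$ (or $\psi$) is right, and you correctly flag the issue: both of your routes---through $f$ or through a presentation of $\L(\Sch)$---invoke ingredients of Theorem~\ref{T: Donkin decategorification}, which in this paper is stated \emph{after} Theorem~\ref{T: Schur ring}. This is a presentational awkwardness rather than a genuine circularity: Theorem~\ref{T: Donkin decategorification} is quoted from Donkin, and in Donkin's development the isomorphisms $\theta$ and $f$ are established without first constructing $\omega$. So defining $\omega:=\theta\circ\psi\circ\theta^{-1}$ (or $f^{-1}\circ\varpi\circ f$) is legitimate; one then checks on generators that $\omega([S^nE])=[\bigwedge\nolimits^nE]$ because $\psi(h_n)=e_n$ (respectively $\varpi([k(n)])=[\sgn(n)]$). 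Your ``internal'' alternative is also fine once you note that the relation $\sum_{i=0}^d(-1)^i[S^iE][\bigwedge\nolimits^{d-i}E]$ is sent to $(-1)^d$ times itself under the swap, hence the ideal of relations is preserved.
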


The following categorification theorem of the rings $\Y(\sym{})$ and $\L(\Sch)$ is essential throughout this article.

\begin{thm}[{\cite[\S4.1(10,13)]{Donkin}}]\label{T: Donkin decategorification} We have a commutative diagram
\[\xymatrix{\L(\Sch)\ar[rr]^{f}&&\Y(\sym{})\\ &\Gamma^{(p)}\ar[ur]_{\phi}\ar[ul]^{\theta}&\\
\L(\Sch)\ar[rr]^{f\hspace{1cm}}\ar[uu]^(0.3)\omega&&\Y(\sym{})\ar[uu]^(0.3)\varpi\\ &\Gamma^{(p)}\ar[ur]_{\phi}\ar[ul]^{\theta}\ar[uu]^(0.3)\psi&}\] where the maps $f,\theta,\phi$ are isomorphisms of graded rings where, for each $(\alpha|\beta)\in\C^2$, $f([K^{\alpha|\beta}E])=[M(\alpha|\beta)]$, $\theta(h_\alpha e_\beta)=[K^{\alpha|\beta}E]$ and $\phi(h_\alpha e_\beta)=[M(\alpha|\beta)]$.
\end{thm}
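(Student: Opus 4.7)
The plan is to build the three maps, check that the Koszul-type relations generating $I$ hold on both sides, and match bases. By Theorem~\ref{T: symm function}(i) the ambient ring $\langle\Lambda(X),\Lambda(Y)\rangle$ is the polynomial ring in $\{h_n(X),e_n(Y):n\in\NN\}$, so the universal property produces graded ring homomorphisms $\tilde\theta:\langle\Lambda(X),\Lambda(Y)\rangle\to\L(\Sch)$ and $\tilde\phi:\langle\Lambda(X),\Lambda(Y)\rangle\to\Y(\sym{})$ determined by $h_n(X)\mapsto[S^nE]$, $e_n(Y)\mapsto[\bigwedge\nolimits^nE]$ and $h_n(X)\mapsto[M((n)|\varnothing)]$, $e_n(Y)\mapsto[M(\varnothing|(n))]$ respectively. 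Using the outer product $[V]\times[W]=[\Ind^{\sym{m+n}}_{\sym m\times\sym n}(V\boxtimes W)]$, transitivity of induction and $K^{\alpha|\beta}E=S^\alpha E\otimes\bigwedge\nolimits^\beta E$, I obtain $\tilde\theta(h_\alpha e_\beta)=[K^{\alpha|\beta}E]$ and $\tilde\phi(h_\alpha e_\beta)=[M(\alpha|\beta)]$ on every monomial.

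Next I would verify vanishing on $I$. The Koszul complex
\[0\to\bigwedge\nolimits^dE\to\bigwedge\nolimits^{d-1}E\otimes S^1E\to\cdots\to\bigwedge\nolimits^1E\otimes S^{d-1}E\to S^dE\to 0\]
is exact for every $d\geq 1$, being the degree-$d$ piece of the Koszul resolution of $k$ over $\mathrm{Sym}(E)$ (which is characteristic-free); hence $\sum_{i=0}^d(-1)^i[S^iE]\cdot[\bigwedge\nolimits^{d-i}E]=0$ in $\L(\Sch)$, in particular whenever $p\nmid d$. Applying the Schur functor transports the identity to $\Y(\sym{})$. Thus $\tilde\theta$ and $\tilde\phi$ annihilate the generators of $I$ and descend to graded ring homomorphisms $\theta,\phi:\Gamma^{(p)}\to\L(\Sch),\Y(\sym{})$.

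To promote these to isomorphisms, I would observe that $\theta(\B)=\D$ and $\phi(\B)=\CC$, where $\B=\{h_\lambda e_{p\mu}:(\lambda|p\mu)\in\P^2_p\}$ is the $\Z$-basis of $\Gamma^{(p)}$ from Theorem~\ref{T: Gamma ring}, and both $\CC$ and $\D$ are $\Z$-bases by Theorems~\ref{T: symmetric ring} and~\ref{T: Schur ring} via the unitriangular expansions of $[M(\lambda|p\mu)]$ and $[K^{\lambda|p\mu}E]$ against the signed Young and listing module bases in the dominance order $\unrhd$. Basis-to-basis bijections therefore make $\theta,\phi$ graded ring isomorphisms. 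The Schur functor induces a ring homomorphism $f:\L(\Sch)\to\Y(\sym{})$ sending $K^{\alpha|\beta}E$ to $M(\alpha|\beta)$, so $f(\D)=\CC$ and $f$ is an isomorphism with $f\circ\theta=\phi$ holding on $\B$ and hence globally. For the involutions it suffices to check on generators: $\psi(h_i)=e_i$ corresponds to $\omega([S^iE])=[\bigwedge\nolimits^iE]$ on the Schur side and to $\varpi([M((i)|\varnothing)])=[\Ind(k(i))\otimes\sgn(i)]=[M(\varnothing|(i))]$ on the symmetric-group side, then extends by multiplicativity.

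The main obstacle is the basis claim underlying Theorems~\ref{T: Gamma ring}, \ref{T: symmetric ring} and~\ref{T: Schur ring}: one must show that the only polynomial relations that survive in the positive-characteristic Green rings are the Koszul-type ones with $p\nmid d$, so that the cardinality of $\P^2_p$ in each graded degree matches the number of non-isomorphic signed Young (or listing) modules. This is the substantive categorification content in Donkin's work and is what ultimately forces the parametrisation by $\P^2_p$; the unitriangular summand structure of $M(\lambda|p\mu)$ and $K^{\lambda|p\mu}E$ relative to $\unrhd$ is the key ingredient.
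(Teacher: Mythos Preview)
The paper does not prove this theorem; it is quoted from \cite[\S4.1(10,13)]{Donkin} and used as a black box, so there is no argument in the present paper to compare your sketch against.

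Your outline is the natural one and most of it is sound, but there is a real gap in the Koszul step. The ring $\L(\Sch)$ is a \emph{Green} ring: the only relations are $[V]+[W]=[V\oplus W]$, so a merely exact complex does not force the alternating sum of its terms to vanish---only a \emph{split} exact complex does. As written, your argument would give $\sum_{i=0}^d(-1)^i[S^iE]\cdot[\bigwedge\nolimits^{d-i}E]=0$ for \emph{every} $d$, and then $\theta$ would factor through $\Lambda(X)$; that would force each $[K^{\lambda|p\mu}E]$ with $\mu\neq\varnothing$ into the $\Z$-span of $\{[S^\nu E]:\nu\in\P\}\subset\D$, contradicting the linear independence of $\D$ that you invoke two lines later. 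What is actually true is that the degree-$d$ Koszul complex carries the usual ``de Rham'' operator $\delta$ going the other way, and on $\bigwedge\nolimits^iE\otimes S^{d-i}E$ one has $\partial\delta+\delta\partial=d\cdot\mathrm{id}$. When $p\nmid d$ this gives a contracting homotopy, the complex splits as $\Sch(\infty,d)$-modules, and the alternating sum vanishes in the Green ring; when $p\mid d$ it does not split and the relation genuinely fails, which is precisely why $I$ omits those generators and $\Gamma^{(p)}$ is strictly larger than $\Lambda(X)$. Once you replace ``exact'' by ``split for $p\nmid d$ via the homotopy'', the descent to $\Gamma^{(p)}$ is justified and the remainder of your sketch (basis-to-basis bijections from Theorems~\ref{T: Gamma ring}, \ref{T: symmetric ring}, \ref{T: Schur ring}, the Schur functor giving $f$, and checking the involutions on generators) goes through, modulo the substantive Donkin input you already flag in your final paragraph.
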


We end this section with a remark.

\begin{rem} Let $\m$ be the Mullineux map on $p$-restricted partitions (see, for example, \cite[\S6]{BrKu}) and let $\lambda(0)$ be the $p$-restricted partition such that $\lambda=\lambda(0)+p\xi$ for some partition $\xi$ (here, $\lambda(0)+p\xi$ has the obvious meaning componentwise summation of partitions). Using Theorem \ref{T: Donkin decategorification} and \cite[Theorem 3.21]{DL}, for each $(\lambda|p\mu)\in\P^2_p$, since $\varpi([Y(\lambda|p\mu)])=[Y(\m(\lambda(0))+p\mu|\lambda-\lambda(0))]$, we have \[\omega([\List^{\lambda|p\mu}E])=[\List^{\m(\lambda(0))+p\mu|\lambda-\lambda(0)}E].\] Similarly, in $\Gamma^{(p)}$, we have \[\psi(l_{\lambda|p\mu})=l_{\m(\lambda(0))+p\mu|\lambda-\lambda(0)}\] where, for each $(\alpha|p\beta)\in\P^2_p$, $l_{\alpha|p\beta}$ is the element such that $\phi(l_{\alpha|p\beta})=[Y(\alpha|p\beta)]$ and $\theta(l_{\alpha|p\beta})=[\List^{\alpha|p\beta}E]$ (see \cite[\S4.1]{Donkin}).
\end{rem}

\section{The ring $\Gamma^{(m)}$}\label{S: m'truncation}

Throughout this section, we fix $m\in\NN$. We derive some properties about the graded ring $\Gamma^{(m)}$. Recall that $\Gamma^{(m)}$ is the quotient ring $\langle \Lambda(X),\Lambda(Y)\rangle/I$ where $X=\{x_i:i\in\NN\}$, $Y=\{y_j:j\in\NN\}$ and $I$ is the ideal generated by $\sum_{i=0}^d(-1)^ih_i(X)e_{d-i}(Y)$ for every $d\in\NN$ such that $m\nmid d$. Furthermore, $\Gamma^{(m)}$ is the polynomial ring on $\{h_i,e_{jm}:i,j\in\NN\}$ where $h_n=h_n(X)+I$ and $e_n=e_n(Y)+I$, and it has a $\Z$-basis \[\B=\{h_\lambda e_{m\mu}:(\lambda|m\mu)\in\P^2_m\}.\]

We begin with a lemma.

\begin{lem}\label{L: Lambda 2} \
\begin{enumerate}[(i)]
\item  For each $d\in\NN$, $e_d$ is a $\Z$-linear combination of $h_\lambda e_{sm}$ for some partitions $\lambda$ and $s\in\NN_0$ such that $|\lambda|+sm=d$.
  \item Let \[a_{ij}=\left \{\begin{array}{ll} h_{1-i+j}&\text{if $m\nmid j$,}\\ (-1)^{j+1}e_j&\text{if $j=s m$ and $i=1$,}\\ 1&\text{if $j=sm$ and $i=s m+1$,}\\ 0&\text{otherwise.}\end{array}\right .\] Then, for each $d\geq 1$, we have $e_d=\det(a_{ij})_{1\leq i,j\leq d}$.
  \item If we set $x_i=y_i$ for each $i\in\NN$ then $I=0$ and $\Gamma^{(m)}=\Lambda(X)$ where $h_\lambda=h_\lambda(X)$ and $e_\mu=e_\mu(X)$.
\end{enumerate}
\end{lem}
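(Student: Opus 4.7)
Part (i) follows by strong induction on $d$ from the defining relation $\sum_{i=0}^{d}(-1)^i h_i e_{d-i}=0$, which holds in $\Gamma^{(m)}$ whenever $m\nmid d$. If $m\mid d$ then $e_d=h_\varnothing e_d$ is already of the required form; otherwise the relation yields $e_d=\sum_{k=1}^{d}(-1)^{k+1}h_k e_{d-k}$, and applying the inductive hypothesis to each $e_{d-k}$ expresses $e_d$ as a $\Z$-linear combination of terms $h_k\,h_{\lambda'}e_{s'm}$, which rearrange into $\pm h_\lambda e_{s'm}$ with $|\lambda|+s'm=d$.

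For part (ii) the plan is induction on $d$ with cofactor expansion along the last column of $D_d:=\det(a_{ij})_{1\le i,j\le d}$. When $m\mid d$, the only nonzero entry in column $d$ is $a_{1,d}=(-1)^{d+1}e_d$, because the designated unit entry at row $sm+1$ now lies outside the matrix; expansion yields $D_d=e_d\cdot M_{1,d}$, and $M_{1,d}$ equals $1$ since the residual $(d-1)\times(d-1)$ minor is upper triangular with unit diagonal. When $m\nmid d$, column $d$ equals $(h_d,h_{d-1},\ldots,h_1)^{T}$, so expansion combined with the reindexing $k=d-i+1$ gives
\[
D_d \;=\; \sum_{k=1}^{d}(-1)^{k+1}h_k\,M_{d-k+1,d},
\]
which matches the recursion from (i); it remains to show $M_{d-k+1,d}=D_{d-k}$ for each $1\le k\le d$, after which induction closes the argument.

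The identity $M_{d-k+1,d}=D_{d-k}$ is the main technical step, and I intend to establish it by exhibiting a block lower-triangular structure of the submatrix with rows $\{1,\ldots,d\}\setminus\{d-k+1\}$ and columns $\{1,\ldots,d-1\}$. The key observation is that every row $r\in\{d-k+2,\ldots,d\}$ exhibits a staircase pattern: $a_{r,j}=0$ for $j<r-1$, and $a_{r,r-1}=1$ (this is either $h_0=1$ when $m\nmid r-1$, or the designated unit entry when $r-1=tm$). Hence these rows vanish across columns $1,\ldots,d-k$, so the submatrix takes the form $\left(\begin{smallmatrix}A_{11}&\ast\\0&A_{22}\end{smallmatrix}\right)$ in which $A_{11}=(a_{ij})_{1\le i,j\le d-k}$ is exactly the matrix defining $D_{d-k}$ and $A_{22}$ is upper triangular with unit diagonal; the block-triangular determinant formula then delivers $M_{d-k+1,d}=D_{d-k}$.

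Part (iii) is immediate: the substitution $y_i=x_i$ sends each generator $\sum_{i=0}^{d}(-1)^i h_i(X)e_{d-i}(Y)$ of $I$ to $\sum_{i=0}^{d}(-1)^i h_i(X)e_{d-i}(X)$, which vanishes by Theorem~\ref{T: symm function}(ii); hence $I$ collapses to zero, $\langle\Lambda(X),\Lambda(Y)\rangle$ descends to $\Lambda(X)$, and one reads off $h_\lambda=h_\lambda(X)$ and $e_\mu=e_\mu(X)$.
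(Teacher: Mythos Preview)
Your proof is correct. Parts (i) and (iii) are identical to the paper's argument. For part (ii) you follow the same overall plan---induction on $d$ and cofactor expansion along the last column---but organise the $m\nmid d$ case differently: you expand along column $d$ in one shot and then prove directly, via the block lower-triangular structure, that each minor $M_{d-k+1,d}$ equals $D_{d-k}$. The paper instead introduces auxiliary matrices $B_s=\left(\begin{smallmatrix}A_{d-s}&z_{d-s}\\ v_{d-s}&h_s\end{smallmatrix}\right)$ with $A_d=B_1$ and peels off one term at a time, proving the recursion $\det(B_s)=h_s\det(A_{d-s})-\det(B_{s+1})$ by expanding along the last row and iterating down to $B_{d-1}$. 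Your single observation that $a_{r,j}=0$ for $j<r-1$ and $a_{r,r-1}=1$ handles all the minors uniformly and avoids the nested induction on $s$; the paper's route is marginally more self-contained in that each step is a $2\times 2$ block calculation rather than a general block-triangular argument. Either way the same identity $\sum_{k=1}^d(-1)^{k+1}h_k e_{d-k}=e_d$ closes the induction.
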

\begin{proof} The proof of part (i) is straightforward by using induction on $d$ and the relation $\sum^d_{i=1}(-1)^ih_ie_{d-i}=0$ when $m\nmid d$. Alternatively, it follows from part (ii).

For part (ii), let $A_d=(a_{ij})_{1\leq i,j\leq d}$. Notice that $A_1=(e_1)$ and hence $e_1=\det(A_1)$. Fix a $d\in\NN$. By induction, suppose that $e_j=\det(A_j)$ for all $1\leq j\leq d-1$. Let $w_d$ be the $((d-1)\times 1)$-column matrix with the first entry $(-1)^{d+1}e_d$ and 0 elsewhere, let $z_i$ be the $(i\times 1)$-column matrix with the entries are $h_d, h_{d-1},\ldots,h_{d-i+1}$ reading from the top and let $v_j$ be the $(1\times j)$-row matrix with the last entry is 1 and 0 elsewhere. If $d=\ell m$ then, expand along the last column, \[\det(A_d)=\det\begin{pmatrix}A_{d-1}& w_d\\ v_{d-1}&0\end{pmatrix}=(-1)^{\ell m+1}\cdot (-1)^{\ell m+1}e_{\ell m}=e_d.\]  Suppose that $m\nmid d$. In particular, $m\neq 1$ and thus $h_1=e_1$. Let $B_s=\begin{pmatrix} A_{d-s}&z_{d-s}\\ v_{d-s}&h_s\end{pmatrix}$. We claim that, for all $1\leq s\leq d-1$, $\det(B_s)=\sum^d_{i=s}(-1)^{i-s}h_ie_{d-i}$. When $s=d-1$, we have \[\det(B_{d-1})=\det\begin{pmatrix} A_1&z_1\\ v_1&h_{d-1}\end{pmatrix}=e_1h_{d-1}-h_d=e_1h_{d-1}-e_0h_d.\] Expand $\det(B_s)$ along the last row, by induction, we have \begin{align*}
\det(B_s)=h_s\det(A_{d-s})-\det(B_{s+1})&=h_se_{d-s}-\sum^d_{i=s+1}(-1)^{i-(s+1)}h_ie_{d-i}\\
&=\sum^d_{i=s}(-1)^{i-s}h_ie_{d-i}.
\end{align*} Notice that $A_d=B_1$. Therefore, we conclude that \[\det(A_d)=\det(B_1)=\sum^d_{i=1}(-1)^{i-1}h_ie_{d-i}=e_d.\]

%For part (iii), fix $d\geq 1$. It is clear that $h_\lambda e_{m\mu}\in (\Gamma^{(m)})^d$ if $\deg(h_\lambda e_{m\mu})=|\lambda|+m|\mu|=d$. Conversely, using part (i), $e_\beta=e_{\beta_1}\cdots e_{\beta_s}$ can be written as a $\Z$-linear combination of $h_\xi e_{m\mu}$ for some partitions $\xi$ and $\mu$ such that $|\xi|+m|\mu|=|\beta|$. Therefore, $h_\alpha e_\beta$ can be written as a $\Z$-linear combination of $h_\lambda e_{m\mu}$ for some partitions $\lambda$ and $\mu$ such that $|\lambda|+m|\mu|=|\alpha|+|\beta|=d$. This shows that $(\Gamma^{(m)})^d$ lies in the $\Z$-linear combination of the elements of the set $X=\{h_\lambda e_{m\mu}\ :\ (\lambda|m\mu)\in\P^2(d)\}$. Our final task is to show that $X$ is $\Z$-linearly independent. We have an isomorphism of graded rings \[\psi: \Z[h_1,h_2,\ldots,w_1,w_2,\ldots]\to \Z[h_1,h_2,\ldots,e_1,e_2,\ldots]\] given by $\psi(h_r)=h_r$ and $\psi(w_s)=\sum^s_{i=0}(-1)^ih_ie_{s-i}$ for all $r$ and for all $s$ such that $m\nmid s$. Under the isomorphism, the ideal $I$ of $\Z[h_1,h_2,\ldots,e_1,e_2,\ldots]$ is identified with the ideal $(w^s:m\nmid s)$ of $\Z[h_1,h_2,\ldots,w_1,w_2,\ldots]$. As such, $(\Gamma^{(m)})^d$ has $\Z$-basis of cardinality \[|\{h_\lambda w_{m\mu}:(\lambda|m\mu)\in\P^2(d)\}|=|X|.\]

For part (iii), if $x_i=y_i$ for each $i\in \NN$, since $\sum_{i=0}^d(-1)^ih_i(X)e_{d-i}(X)=0$ for all $d$ by Theorem \ref{T: symm function}(ii), we have $I=0$. So $h_i=h_i(X)$, $e_i=e_i(X)$ and $\Gamma^{(m)}=\Lambda(X)$.
\end{proof}

In view of Lemma~\ref{L: Lambda 2}(i), we introduce the following notation. For each $n\in\NN$, we define $d_{n,\mu}\in \Z$, with respect to the basis $\B$, so that \[e_n=\sum_{n=|\mu|+sm} d_{n,\mu}h_\mu e_{sm}\] where $s\in\NN_0$ in the above summation. Notice that if $n=\ell m$ for some $\ell$ then $d_{\ell m,\mu}=0$ for all $\mu\neq\varnothing$ and $d_{\ell m,\varnothing}=1$. %Notice also that the number $d_{n,\mu}$ is only defined when $n\equiv |\mu|\mod m$ and $n\geq |\mu|$.

%Let \[\P(n;m)=\{\mu\in\P:\text{$n=|\mu|+sm$ for some $s\in \NN_0$}\}.\] By Lemma \ref{L: Lambda 2}(i), for each $n$, we have \[e_n=\sum_{\mu\in\P(n;m)} d_{n,\mu}h_\mu e_{n-|\mu|},\] where the number $d_{n,\mu}$ is only defined when $n-|\mu|=sm$ for some $s\in \NN_0$. By Lemma \ref{L: Lambda 2}(iii), if $n=\ell m$ for some $\ell$ then $d_{\ell m,\mu}=0$ for all $\mu\neq\varnothing$ and $d_{\ell m,\varnothing}=1$.

\begin{cor}\label{C: d_n=d_n+1}  Let $\mu\in \P(n)$. For all $r\in\NN_0$, we have \[d_{n+rm,\mu}=d_{n,\mu}.\]
\end{cor}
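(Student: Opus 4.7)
I plan to induct on $n = |\mu|$. The base case $n = 0$ is immediate: $\mu = \varnothing$, and since $e_{rm}$ itself lies in the basis $\B$ for every $r \geq 0$, we have $d_{rm, \varnothing} = 1 = d_{0, \varnothing}$.

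For the inductive step, suppose $d_{n' + r'm, \mu'} = d_{n', \mu'}$ for all $\mu' \in \P(n')$ with $n' < n$ and all $r' \geq 0$. If $m \mid n$, then $m \mid (n + rm)$ as well, so both $e_n$ and $e_{n+rm}$ are themselves elements of $\B$. Since $n > 0$ forces $\mu$ to be nonempty, both $d_{n, \mu}$ and $d_{n+rm, \mu}$ vanish.

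The key case is $m \nmid n$ (whence also $m \nmid (n + rm)$). Here the defining relations of the ideal $I$ give
\[ e_n = \sum_{i=1}^{n} (-1)^{i+1} h_i\, e_{n-i}, \qquad e_{n+rm} = \sum_{i=1}^{n+rm} (-1)^{i+1} h_i\, e_{n+rm-i}. \]
Substituting the $\B$-expansions of $e_{n-i}$ and $e_{n+rm-i}$ into these identities and extracting the coefficients of $h_\mu$ and $h_\mu e_{rm}$ respectively, I would obtain the parallel recursions
\[ d_{n, \mu} = \sum_{v} (-1)^{v+1} d_{n-v,\, \mu - v}, \qquad d_{n+rm, \mu} = \sum_{v} (-1)^{v+1} d_{(n-v)+rm,\, \mu - v}, \]
where $v$ ranges over the distinct part values of $\mu$ and $\mu - v$ denotes $\mu$ with one part equal to $v$ removed. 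The bookkeeping observation is that a summand $h_i\, h_\nu\, e_{sm}$ from the substituted expansion equals $h_\mu$ (respectively $h_\mu e_{rm}$) if and only if $s = 0$ (respectively $s = r$), $i$ is a distinct part value of $\mu$, and $\nu = \mu - i$, with no multiplicity factor. Applying the inductive hypothesis to each $\mu - v \in \P(n - v)$ (noting $n - v < n$) converts the second sum into the first, yielding $d_{n+rm, \mu} = d_{n, \mu}$ as required. The only real content beyond this routine induction is the combinatorial identification of the contributing terms in the substituted expansion, and no step presents a substantive obstacle.
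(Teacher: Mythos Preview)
Your inductive argument is correct and provides a genuinely different route from the paper's proof. The paper appeals to the determinant formula of Lemma~\ref{L: Lambda 2}(ii), expressing $e_d=\det(A_d)$, and then argues by examining the cofactor expansion along the $(rm)$th column of $A_{n+rm}$ that both $d_{n+rm,\mu}$ and $d_{n,\mu}$ equal the coefficient of $h_\mu$ in $\det(C_n)$, where $C_n$ is obtained from $A_n$ by zeroing out the $e$-entries. Your proof instead bypasses the determinant entirely: you use only the defining relation $e_d=\sum_{i=1}^d(-1)^{i+1}h_ie_{d-i}$ for $m\nmid d$, substitute the known $\B$-expansions of the $e_{d-i}$, and match coefficients to obtain the parallel recursions in the $d$-coefficients, which the inductive hypothesis then identifies. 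The bookkeeping claim that $h_ih_\nu e_{sm}=h_\mu e_{rm}$ forces $s=r$, $i$ a part of $\mu$, and $\nu=\mu-i$ (each distinct part value contributing once) is straightforward since $h_ih_\nu=h_{(i)\cup\nu}$ as a basis element. Your approach is more elementary and self-contained, requiring neither the matrix $A_d$ nor any minor computation; the paper's approach, on the other hand, exploits structure that is already set up for other purposes and gives a closed-form identification of the common value.
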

\begin{proof} For each $(\alpha|m\beta)\in\P^2_m$, we denote the coefficient of $h_\alpha e_{m\beta}$ in $z\in\Gamma^{(m)}$ as $(z,h_\alpha e_{m\beta})$. Then $d_{n,\mu}=(e_n,h_\mu)$ and $d_{n+rm,\mu}=(e_{n+rm},h_\mu e_{rm})$. Let $A_d=(a_{ij})_{1\leq i,j\leq d}$ be defined as in Lemma \ref{L: Lambda 2}(ii) and, by which, we have both $e_n=\det(A_n)$ and $e_{n+rm}=\det(A_{n+rm})$. By virtue of the entries of $A_d$, an element of the form $h_\alpha e_{jm}$ only appears in $e_{jm}(A_d)^{(1,jm)}$ where $(A_d)^{(1,jm)}$ is the $(1,jm)$-minor of the matrix $A_d$.

Let $C_d$ be the matrix obtained from $A_{d}$ by replacing all entries of the form $(-1)^{\ell +1}e_\ell$ by $0$. Expand along the $(rm)$th column of $A_n$, we have
\begin{align*}
(e_{n+rm},h_\mu e_{rm})&=(\det(A_{n+rm}),h_\mu e_{rm})\\
&=\left (e_{rm}(A_{n+rm})^{(1,rm)},h_\mu e_{rm}\right )\\
&=\left (e_{rm}\det\begin{pmatrix} J&B\\ \mathbf{0}&C_n\end{pmatrix},h_\mu e_{rm}\right )\\
%&=\left (\det\begin{pmatrix} J&B\\ \mathbf{0}&C_{n-rm}\end{pmatrix},h_\mu \right )\\
&=(\det(C_n),h_\mu )
\end{align*} where $J$ is an upper unitriangular matrix and $B$ is some suitable matrix. By similar calculation, we have $(e_n,h_\mu)=(\det(C_n),h_\mu)$. So we conclude that $d_{n+rm,\mu}=d_{n,\mu}$.
\end{proof}

In view of Corollary~\ref{C: d_n=d_n+1}, we have the following notation.

\begin{nota}\label{N: d mu} Let $n\in\NN_0$. We denote \[\P(n;m)=\{\mu\in\P:\text{$n=|\mu|+sm$ for some $s\in \NN_0$}\}.\] For each partition $\mu$, we write $d_\mu=d_{a,\mu}$ for the common number for all $a=|\mu|+rm$ where $r\in \NN_0$ so that, for all $n\in\NN_0$, we have  \begin{equation}\label{Eq: 2}
  e_n=\sum_{n=|\mu|+sm} d_\mu h_\mu e_{sm}=\sum_{\mu\in\P(n;m)} d_\mu h_\mu e_{n-|\mu|}.
\end{equation}
\end{nota}

\section{The straightening rule}\label{S: straightening rule}

Recall the $\Z$-bases $\B$, $\CC$ and $\D$ of the rings $\Gamma^{(m)}$, $\Y(\sym{})$ and $\L(\Sch)$ denoted in Theorems \ref{T: Gamma ring}, \ref{T: symmetric ring} and \ref{T: Schur ring} respectively. Our main result in this section provides an explicit formula to write, for each $(\alpha|\beta)\in\C^2(n)$, the element $h_\alpha e_\beta$ in terms of $\B$. As a consequence, we have the explicit formulae to write $[M(\alpha|\beta)]$ and $[K^{\alpha|\beta}E]$ as $\Z$-linear combinations in terms of the bases $\CC$ and $\D$ respectively.

We begin with a list of notation.

\begin{nota} Fix a positive integer $m$, let $\lambda\in\P(n)$ and let $b=\lfloor\frac{n}{m}\rfloor$.
\begin{enumerate}[(i)]
\item Let $\C(\lambda)$ be the set of compositions of the form $\delta=(\delta_1,\ldots,\delta_{\ell(\lambda)})$ of type $\lambda$ and let $c_\lambda=|\C(\lambda)|$, i.e., \[c_\lambda=\frac{\ell(\lambda)!}{\prod_{i\in\NN}n_i!}\] where $n_i$ is the number of parts of $\lambda$ of size $i$.
\item Let $c_\lambda^{(m)}$ be the number of compositions $\delta=(\delta_1,\ldots,\delta_{\ell(\lambda)})\in\C(\lambda)$ such that \[\delta_1+\delta_2+\cdots+\delta_j\not\equiv 0\mod m\] for all $1\leq j\leq \ell(\lambda)$.
\item Let $\e_\lambda=(-1)^{|\lambda|-\ell(\lambda)}$.
\item For a finite sequence $A=(\delta^{(1)},\ldots,\delta^{(r)})$ of partitions, we write $c_A=\prod_{i=1}^rc_{\delta^{(i)}}$ and $\len{A}=r$. For example, $\len{(\varnothing)}=1$. Similarly, we write $c^{(m)}_A=\prod_{i=1}^rc^{(m)}_{\delta^{(i)}}$ and $\e_A=\prod_{i=1}^r\e_{\delta^{(i)}}$.
\item If $\delta,\xi$ are compositions such that $\delta\cont \xi$ has type $\lambda$, we write $\lambda=\delta\cup \xi$ (or $\lambda=\xi\cup\delta$). The notation $\lambda=\xi^{(1)}\cup\cdots\cup\xi^{(r)}$ has the obvious meaning given some compositions $\xi^{(1)},\ldots,\xi^{(r)}$.
\item Let $W(\lambda)$ be the set consisting of sequences $(\delta^{(1)},\ldots,\delta^{(r)},\xi)$ of partitions for some $r\in\NN_0$ such that $\xi\cup\delta^{(1)}\cup\cdots\cup\delta^{(r)}=\lambda$, $\delta^{(i)}\neq\varnothing$ and $m\mid |\delta^{(i)}|$ for all $1\leq i\leq r$, here $r$ may be zero.
\item For each $1\leq i\leq b$, let $W_i(\lambda)=\{A\in W(\lambda):\len{A}=i+1\}$.
\item For each $1\leq i\leq b$, let $P_i$ be the subset of $\C(\lambda)$ consisting of all compositions $(\mu_1,\ldots,\mu_{\ell(\lambda)})$ such that $\mu_1+\mu_2+\cdots+\mu_r=mi$ for some $1\leq r\leq \ell(\lambda)$.
\item Let $\xi$ be a partition and $\beta$ be a composition. Suppose that $s=\ell(\beta)$. Recall Notation \ref{N: d mu}. We denote by $V(\beta;(\xi,m\mu))$ the set consisting of the $s$-tuples \[(\xi^{(1)},\ldots,\xi^{(s)})\in \P(\beta_1;m)\times\cdots\times\P(\beta_{s};m)\] such that $\xi=\xi^{(1)}\cup\cdots\cup\xi^{(s)}$ and $m\mu=(\beta_1-|\xi^{(1)}|)\cup\cdots\cup (\beta_s-|\xi^{(s)}|)$. Furthermore, we write \[c^{(m)}_{\beta;(\xi,m\mu)}=\sum_{A\in V(\beta;(\xi,m\mu))} \e_A c_A^{(m)}.\]
\end{enumerate} By convention, $c_\varnothing=1$, $c_\varnothing^{(m)}=1$, $\e_\varnothing=1$, $W(\varnothing)=\{(\varnothing)\}$ and $c^{(m)}_{\beta;(\xi,m\mu)}=0$ if $V(\beta;(\xi,m\mu))=\emptyset$.
\end{nota}

For example, let $\lambda=(3,2,1,1)$ and $m=3$. Then the compositions $(\mu_1,\mu_2,\mu_3,\mu_4)\in \C(\lambda)$ with the property that $3\nmid \sum_{i=1}^j\mu_i$ for all $1\leq j\leq 4$ are
\begin{align*}
  &(1,1,3,2),&&(1,3,1,2),&&(1,1,2,3).
\end{align*} Thus $c^{(3)}_{(3,2,1,1)}=3$. Furthermore, \[W(\lambda)=\{(\lambda),((3),(2,1,1)),((2,1),(3,1)),((3,2,1),(1)),((3),(2,1),(1)),((2,1),(3),(1))\}.\]

For another example, if $\lambda\neq\varnothing$ and $m\mid |\lambda|$, then $c_\lambda^{(m)}=0$. Also, \[W((m))=\{((m)),((m),\varnothing)\}.\]

We may now state our main theorem.

\begin{thm}\label{T: straight sym m} Fix a positive integer $m$. For each $(\alpha|\beta)\in\C^2(n)$,  \[h_\alpha e_\beta=\sum c^{(m)}_{\beta;(\xi,m\mu)} h_\lambda e_{m\mu}\] where the sum runs over all $(\lambda|m\mu)\in\P^2_m(n)$ such that $\lambda=\alpha\cup \xi$ for some partition $\xi$. %In particular, we have $d_\lambda=\e_\lambda c^{(m)}_\lambda$ for each partition $\lambda$.
\end{thm}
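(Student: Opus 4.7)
The plan is to expand $e_\beta$ factor by factor using Lemma~\ref{L: Lambda 2}(i), multiply by $h_\alpha$, combine the resulting $h$-products by commutativity, and reduce the theorem to the combinatorial identity $d_\mu = \e_\mu c^{(m)}_\mu$ for each partition $\mu$.

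Concretely, I would apply Lemma~\ref{L: Lambda 2}(i) and Notation~\ref{N: d mu} to each factor of $e_\beta = e_{\beta_1}\cdots e_{\beta_s}$, with $s = \ell(\beta)$, multiply the resulting expressions, and then multiply by $h_\alpha$:
\[
h_\alpha e_\beta \;=\; \sum_{(\xi^{(1)},\ldots,\xi^{(s)})} \Big(\prod_{j=1}^s d_{\xi^{(j)}}\Big)\, h_\alpha h_{\xi^{(1)}}\cdots h_{\xi^{(s)}}\, e_{\beta_1-|\xi^{(1)}|}\cdots e_{\beta_s-|\xi^{(s)}|},
\]
summed over $(\xi^{(1)},\ldots,\xi^{(s)}) \in \P(\beta_1;m)\times\cdots\times\P(\beta_s;m)$. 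Setting $\xi = \xi^{(1)} \cup \cdots \cup \xi^{(s)}$, using commutativity of the $h_i$'s to collapse $h_\alpha h_{\xi^{(1)}}\cdots h_{\xi^{(s)}}$ to $h_\lambda$ with $\lambda = \alpha\cup\xi$, and letting $m\mu$ denote the partition type of $(\beta_1-|\xi^{(1)}|,\ldots,\beta_s-|\xi^{(s)}|)$, the tuples contributing to a fixed $(\lambda|m\mu) \in \P^2_m(n)$ are by definition exactly the elements of $V(\beta;(\xi,m\mu))$. Hence the coefficient of $h_\lambda e_{m\mu}$ is $\sum_{A \in V(\beta;(\xi,m\mu))} \prod_{j=1}^s d_{\xi^{(j)}}$.

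Everything then hinges on the identity $d_\mu = \e_\mu c^{(m)}_\mu$ for each $\mu \in \P$. If $m \mid |\mu|$ and $\mu \neq \varnothing$, then any $n = |\mu| + sm$ is divisible by $m$, so $e_n \in \B$ and $d_\mu = 0$; correspondingly the final partial sum $|\mu|$ in the definition of $c^{(m)}_\mu$ is $\equiv 0 \pmod m$, forcing $c^{(m)}_\mu = 0$. If $m \nmid |\mu|$, I would iterate the defining relation $e_d = \sum_{i \geq 1}(-1)^{i+1} h_i e_{d-i}$ (valid precisely when $m \nmid d$) starting from $e_{|\mu|}$; each terminating path corresponds to a composition $(\delta_1,\ldots,\delta_k) \in \C(\mu)$ for which $\sum_{i=1}^j \delta_i \not\equiv |\mu| \pmod m$ for all $1 \leq j \leq k-1$ (the condition that the recursion does not stop early), contributing sign $\prod_i(-1)^{\delta_i+1} = (-1)^{|\mu|+\ell(\mu)} = \e_\mu$. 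Since $m \nmid |\mu|$, the partial-sum condition at $j=k$ in the definition of $c^{(m)}_\mu$ is automatic, and the reversal map $(\delta_1,\ldots,\delta_k) \mapsto (\delta_k,\ldots,\delta_1)$ is a bijection on $\C(\mu)$ that matches the path-condition to the $c^{(m)}_\mu$-condition via $\sum_{i=1}^j \delta_{k-i+1} = |\mu| - \sum_{i=1}^{k-j}\delta_i$.

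Once $d_\mu = \e_\mu c^{(m)}_\mu$ is established, for each $A = (\xi^{(1)},\ldots,\xi^{(s)})$ we have $\prod_j d_{\xi^{(j)}} = \prod_j \e_{\xi^{(j)}} c^{(m)}_{\xi^{(j)}} = \e_A c^{(m)}_A$, and the inner sum becomes $c^{(m)}_{\beta;(\xi,m\mu)}$ by definition. The main obstacle is the combinatorial identity $d_\mu = \e_\mu c^{(m)}_\mu$: the recursion itself is elementary, but the path-condition it naturally produces involves $|\mu|$ modulo $m$ rather than the cleaner $0$-condition appearing in the definition of $c^{(m)}_\mu$, so the reversal bijection on compositions is needed to reconcile the two.
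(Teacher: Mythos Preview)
Your proposal is correct. The structural portion---expanding each $e_{\beta_i}$ via Notation~\ref{N: d mu}, multiplying, and collecting terms into the basis $\B$---is exactly what the paper does in its proof of Theorem~\ref{T: straight sym m}, and your identification of the coefficient of $h_\lambda e_{m\mu}$ with $c^{(m)}_{\beta;(\xi,m\mu)}$ is the same as the paper's.

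Where you genuinely diverge is in the proof of the key identity $d_\mu=\e_\mu c^{(m)}_\mu$. The paper obtains this as Lemma~\ref{L: P and cA}(ii) by a two-step route: first Lemma~\ref{L: d lambda} gives the closed formula $d_\lambda=-\e_\lambda\sum_{A\in W(\lambda)}(-1)^{\len{A}}c_A$ (proved by induction using the classical expansion $e_n(X)=\sum_\lambda \e_\lambda c_\lambda h_\lambda(X)$ and the specialisation of Lemma~\ref{L: Lambda 2}(iii)), and then Lemma~\ref{L: P and cA}(i) supplies a bijection identifying $\sum_{A\in W_j(\lambda)}c_A$ with $\sum_{i_1<\cdots<i_j}|P_{i_1}\cap\cdots\cap P_{i_j}|$, after which inclusion--exclusion yields $c^{(m)}_\lambda$. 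Your argument instead iterates the single relation $e_d=\sum_{i\geq 1}(-1)^{i+1}h_i e_{d-i}$ (valid for $m\nmid d$) starting from $e_{|\mu|}$, reads off $d_\mu$ as $\e_\mu$ times the number of compositions of type $\mu$ whose proper partial sums are $\not\equiv|\mu|\pmod m$, and then matches this count to $c^{(m)}_\mu$ via the reversal bijection $(\delta_1,\ldots,\delta_k)\mapsto(\delta_k,\ldots,\delta_1)$ (using $m\nmid|\mu|$ to make the $j=k$ condition automatic). This is shorter and more elementary than the paper's route and bypasses Lemmas~\ref{L: d lambda} and~\ref{L: P and cA}(i) entirely; the trade-off is that the paper's approach produces, as a by-product, the alternating-sum formula of Lemma~\ref{L: d lambda} over the set $W(\lambda)$, which is of some independent combinatorial interest.
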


We illustrate the theorem with an example.

\begin{eg}\label{Ex: 1} Let $m=3$. Then \[e_2=\sum_{2=|\lambda|+3s}\e_\lambda c^{(3)}_\lambda h_\lambda e_{3s}=-h_{(2)}+h_{(1,1)}\] since $c_{(2)}^{(3)}=1$, $\e_{(2)}=-1$, $c_{(1,1)}^{(3)}=1$ and $\e_{(1,1)}=1$. Similarly, we have \[e_4=-h_{(4)}+h_{(3,1)}+h_{(2,2)}-h_{(2,1,1)}+h_{(1)}e_{(3)}.\] So, for example, we get
\begin{align*}
h_{(5)}e_{(4,3,2)}&=h_{(5)}\left (-h_{(4)}+h_{(3,1)}+h_{(2,2)}-h_{(2,1,1)}+h_{(1)}e_{(3)}\right )e_{(3)}\left (-h_{(2)}+h_{(1,1)}\right )\\
&=h_{5,4,2}e_3-h_{5,4,1,1}e_3-h_{5,3,2,1}e_3+h_{5,3,1,1,1}e_3-h_{5,2,2,2}e_3+2h_{5,2,2,1,1}e_3\\
&\ \ \ -h_{5,2,1,1,1,1}e_3-h_{5,2,1}e_{3,3}+h_{5,1,1,1}e_{3,3},
\end{align*} here, after the last equality, we have deliberately removed the parentheses.
\end{eg}

To prove Theorem \ref{T: straight sym m}, we need the following two lemmas.

\begin{lem}\label{L: d lambda} For each partition $\lambda$, \[d_\lambda=-\e_\lambda\left (\sum_{A\in W(\lambda)} (-1)^{\len{A}}c_A\right ).\]
\end{lem}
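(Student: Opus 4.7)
I would proceed by strong induction on $n=|\lambda|$. The base case $\lambda=\varnothing$ is immediate: $d_\varnothing=1$ since $e_0=1$, while $W(\varnothing)=\{(\varnothing)\}$ makes the RHS equal to $-\e_\varnothing\cdot(-1)^1\cdot c_\varnothing=1$.

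For the inductive step, the strategy is to set up parallel recursions for $d_\lambda$ and for $F(\lambda):=\sum_{A\in W(\lambda)}(-1)^{\len{A}}c_A$. On the $d_\lambda$ side, when $m\mid n$ and $\lambda\neq\varnothing$, $e_n$ is already a basis element of $\B$, so $d_\lambda=0$ directly. When $m\nmid n$, Lemma~\ref{L: Lambda 2} yields $e_n=\sum_{i=1}^n(-1)^{i+1}h_ie_{n-i}$; expanding each $e_{n-i}$ in the basis $\B$ and extracting the coefficient of $h_\lambda e_0$ produces the recursion
\[d_\lambda=\sum_i(-1)^{i+1}d_{\lambda\setminus(i)},\]
where $i$ ranges over distinct part values of $\lambda$, with the conventions $d_\varnothing=1$ and $d_\mu=0$ when $\mu\neq\varnothing$ and $m\mid(n-|\mu|)$ (then $e_{n-|\mu|}$ stays in $\B$ and contributes no $h$-factor).

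For $F(\lambda)$, I would expand $c_A=c_\xi\prod c_{\delta^{(i)}}$ and use that each $c_\mu$ counts compositions of type $\mu$: thus $\sum_A c_A$ enumerates pairs $(\sigma,S)$, where $\sigma$ is a composition of type $\lambda$ (the concatenation of the chosen orderings of the blocks of $A$) and $S$ is the subset of $\set{\ell(\sigma)}$ marking the cut positions between consecutive blocks. Such $S$ are exactly subsets of $P(\sigma):=\{j\in\set{\ell(\sigma)}:\sigma_1+\cdots+\sigma_j\equiv 0\pmod m\}$, and $\len{A}=|S|+1$. When $m\mid n$, $\ell(\sigma)\in P(\sigma)$ always, so toggling whether $\ell(\sigma)\in S$ gives a fixed-point-free sign-reversing involution on $W(\lambda)$ (for $\lambda\neq\varnothing$), yielding $F(\lambda)=0$. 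When $m\nmid n$, $\ell(\sigma)\notin P(\sigma)$, and conditioning on the last part $\sigma_{\ell(\sigma)}=i$ identifies $P(\sigma)$ with $P(\sigma')$ for $\sigma'=(\sigma_1,\ldots,\sigma_{\ell(\sigma)-1})$ of type $\lambda\setminus(i)$, producing
\[F(\lambda)=\sum_i F(\lambda\setminus(i))\]
summed over distinct parts $i$ of $\lambda$. Applying the induction hypothesis $F(\lambda\setminus(i))=-\e_{\lambda\setminus(i)}d_{\lambda\setminus(i)}$ together with the sign identity $\e_\lambda\e_{\lambda\setminus(i)}=(-1)^{i+1}$ (immediate from $\e_\mu=(-1)^{|\mu|-\ell(\mu)}$) converts this into the same recursion as for $-\e_\lambda d_\lambda$, closing the induction.

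The delicate point is matching the two recursions term by term: the $d_\lambda$ recursion effectively restricts to parts $i$ with $m\nmid(n-i)$ plus the isolated $i=n$ contribution $d_\varnothing=1$ when $\lambda=(n)$, whereas the $F(\lambda)$ recursion sums over all distinct parts. The reconciliation uses $F(\lambda\setminus(i))=0$ when $\lambda\setminus(i)\neq\varnothing$ and $m\mid(n-i)$ (the case-$m\mid n$ analysis applied inductively to $\lambda\setminus(i)$), while the $\lambda=(n)$, $i=n$ boundary is handled via the base case $F(\varnothing)=-1$.
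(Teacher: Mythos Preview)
Your proof is correct and takes a genuinely different route from the paper's. The paper proves this lemma by specialising $y_i=x_i$ (Lemma~\ref{L: Lambda 2}(iii)), which collapses $\Gamma^{(m)}$ to $\Lambda(X)$; there it invokes Macdonald's identity $e_n(X)=\sum_{\lambda\in\P(n)}\e_\lambda c_\lambda h_\lambda(X)$ and compares coefficients of $h_\lambda(X)$ to obtain the recursion $\e_\lambda c_\lambda=\sum_{\delta\cup\mu=\lambda,\ m\mid|\delta|}\e_\delta c_\delta\, d_\mu$, from which the claimed formula follows by induction on $|\lambda|$. Your argument instead stays entirely inside $\Gamma^{(m)}$: you derive the single-part-removal recursion $d_\lambda=\sum_i(-1)^{i+1}d_{\lambda\setminus(i)}$ directly from the defining relation, and you match it against a parallel recursion for $F(\lambda)$ obtained from the bijection between weighted elements of $W(\lambda)$ and pairs $(\sigma,S)$ with $\sigma\in\C(\lambda)$ and $S\subseteq P(\sigma)$. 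This is more combinatorial and more self-contained (no appeal to the Macdonald identity or the specialisation trick); interestingly, your $(\sigma,S)$ bijection is exactly what the paper deploys \emph{afterwards}, in Lemma~\ref{L: P and cA}(i), to relate $d_\lambda$ to $c_\lambda^{(m)}$ via inclusion--exclusion, so your argument in effect merges the two lemmas.

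Two small remarks. First, your parenthetical ``$d_\mu=0$ when $\mu\neq\varnothing$ and $m\mid(n-|\mu|)$'' has the divisibility condition inverted: with $\mu=\lambda\setminus(i)$ one has $|\mu|=n-i$, so the correct hypothesis is $m\mid|\mu|$ (equivalently $m\mid(n-i)$), not $m\mid(n-|\mu|)=m\mid i$. Second, the ``delicate point'' you flag is not actually delicate: both recursions range over \emph{all} distinct parts $i$, and the induction hypothesis $F(\lambda\setminus(i))=-\e_{\lambda\setminus(i)}d_{\lambda\setminus(i)}$ applies uniformly. Since $\e_\lambda(-1)^{i+1}=\e_{\lambda\setminus(i)}$, the two recursions match term by term without any case analysis; the vanishing of the $m\mid(n-i)$ terms happens automatically on both sides.
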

\begin{proof} Let $b=\lfloor \frac{n}{m}\rfloor$. Recall that the elementary symmetric functions can be written in terms of the complete symmetric functions as \[e_n(X)=\sum_{\lambda\in\P(n)} \e_\lambda c_\lambda h_\lambda(X)\] with respect to the set of variables $X=\{x_i:i\in\NN\}$ (see \cite[Chapter I.2 Example 20]{Mac}). Similarly, $e_n(Y)=\sum_{\lambda\in\P(n)} \e_\lambda c_\lambda h_\lambda(Y)$. Let $x_i=y_i$ for each $i\in \NN$. Using Lemma \ref{L: Lambda 2}(iii) and Equation \ref{Eq: 2}, we have
\begin{equation}\label{Eq: 3}
\sum_{\mu\in\P(n;m)} d_\mu h_\mu(X) \left (\sum_{\delta\in\P(n-|\mu|)} \e_\delta c_\delta h_\delta(X)\right )=\sum_{\lambda\in\P(n)} \e_\lambda c_\lambda h_\lambda(X).
\end{equation} Fix a partition $\lambda\in\P(n)$. If $\lambda=\mu\cup\delta$ with $\mu,\delta\in\P$ then $\mu$ uniquely determines $\delta$ and vice versa. In Equation \ref{Eq: 3}, $m\mid |\delta|$ (since $\mu\in P(n;m)$). So the equation translates to \[\sum_{\lambda\in\P(n)}\left (\sum^b_{i=0}\sum_{\substack{\delta\in\P(mi),\\ \delta\cup \mu=\lambda}} \e_\delta d_\mu c_\delta\right )h_\lambda(X)=\sum_{\lambda\in\P(n)} \e_\lambda c_\lambda h_\lambda(X).\] By Theorem~\ref{T: symm function}(i), comparing the coefficients of $h_\lambda(X)$, we have
\begin{equation}\label{Eq: 1}
\e_\lambda c_\lambda=\sum^b_{i=0}\sum_{\substack{\delta\in\P(mi),\\ \delta\cup \mu=\lambda}} \e_\delta d_\mu c_\delta.
\end{equation}  We prove the equality in the statement by using induction on the size of $\lambda$. If $\lambda=\varnothing$ then $d_\varnothing=1$ and $-\e_\varnothing(-1)^{\len{(\varnothing)}}c_\varnothing=1$. Using Equation \ref{Eq: 1} and induction, we have
\begin{align*}
  \e_\lambda c_\lambda=d_\lambda+\sum^b_{i=1}\sum_{\substack{\delta\in\P(mi),\\ \delta\cup \mu=\lambda}} \e_\delta d_\mu c_\delta&=d_\lambda-\sum^b_{i=1}\sum_{\substack{\delta\in\P(mi),\\ \delta\cup \mu=\lambda}} \e_\delta \e_\mu c_\delta\left (\sum_{B\in W(\mu)} (-1)^{\len{B}}c_B\right )\\
  &=d_\lambda+\e_\lambda \sum^b_{i=1}\sum_{\substack{\delta\in\P(mi),\\ \delta\cup \mu=\lambda}} \left (\sum_{B\in W(\mu)} (-1)^{\len{B}+1}c_\delta c_B\right )
\end{align*} Notice that $W(\lambda)\backslash\{(\lambda)\}$ consists of sequences of partitions, one for each $i\in\set{b}$, $(\delta,\delta^{(1)},\ldots,\delta^{(j)},\xi)$ such that $\delta\cup \mu=\lambda$, $|\delta|=im$ and $(\delta^{(1)},\ldots,\delta^{(j)},\xi)\in W(\mu)$. So we get
\[d_\lambda=\e_\lambda\left (c_\lambda-\sum_{A\in W(\lambda)\backslash \{(\lambda)\}} (-1)^{\len{A}} c_A\right )=-\e_\lambda\left (\sum_{A\in W(\lambda)} (-1)^{\len{A}}c_A\right ).\] The proof is now complete.
\end{proof}

\begin{lem}\label{L: P and cA} Let $\lambda\in\P(n)$ and $b=\lfloor\frac{n}{m}\rfloor$.
\begin{enumerate}
  \item [(i)] For each $j\in\set{b}$, we have \[\sum_{1\leq i_1<\cdots<i_j\leq b} |P_{i_1}\cap \cdots \cap P_{i_j}|=\sum_{A\in W_j(\lambda)} c_A.\]% where the left-hand sum runs over all distinct positive integers $i_1<\ldots<i_j$ bounded above by $b$.
  \item [(ii)] We have $d_\lambda=\e_\lambda c^{(m)}_\lambda$.
\end{enumerate}
\end{lem}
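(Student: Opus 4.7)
My plan is to prove (i) by a direct bijective count and then obtain (ii) from (i) together with Lemma~\ref{L: d lambda} via inclusion--exclusion.

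For part (i), I would observe that a composition $\mu=(\mu_1,\ldots,\mu_{\ell(\lambda)})\in\C(\lambda)$ lies in $P_{i_1}\cap\cdots\cap P_{i_j}$ for $1\leq i_1<\cdots<i_j\leq b$ precisely when the sequence of partial sums of $\mu$ passes through all of $mi_1<mi_2<\cdots<mi_j$. Such a composition therefore splits uniquely as a concatenation
\[
\mu=\nu^{(1)}\cont\nu^{(2)}\cont\cdots\cont\nu^{(j)}\cont\nu^{(j+1)},
\]
with $|\nu^{(k)}|=m(i_k-i_{k-1})$ for $1\leq k\leq j$ (setting $i_0=0$) and $|\nu^{(j+1)}|=n-mi_j$. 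Let $A=(\delta^{(1)},\ldots,\delta^{(j)},\xi)$ record the respective types. Then each $\delta^{(k)}$ is a nonempty partition with $m\mid|\delta^{(k)}|$, and $\delta^{(1)}\cup\cdots\cup\delta^{(j)}\cup\xi=\lambda$, so $A\in W_j(\lambda)$. Conversely, given any $A\in W_j(\lambda)$ the partial sums $mi_k=|\delta^{(1)}|+\cdots+|\delta^{(k)}|$ are determined, and the number of compositions of $\lambda$ arising from this block pattern is exactly $c_{\delta^{(1)}}\cdots c_{\delta^{(j)}}c_\xi=c_A$. Summing over $A\in W_j(\lambda)$ then equals the left hand side, giving (i).

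For part (ii), note that by definition $c^{(m)}_\lambda=|\C(\lambda)\setminus(P_1\cup\cdots\cup P_b)|$, so inclusion--exclusion yields
\[
c^{(m)}_\lambda=\sum_{j=0}^{b}(-1)^j\!\!\sum_{1\leq i_1<\cdots<i_j\leq b}\!\!|P_{i_1}\cap\cdots\cap P_{i_j}|.
\]
The $j=0$ term is $c_\lambda=c_{(\lambda)}$, matching the unique element $(\lambda)\in W_0(\lambda)$. Applying part (i) to the remaining terms converts the right hand side into
\[
\sum_{j=0}^{b}(-1)^{j}\sum_{A\in W_j(\lambda)}c_A=-\sum_{A\in W(\lambda)}(-1)^{\len{A}}c_A,
\]
since $\len{A}=j+1$ on $W_j(\lambda)$. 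By Lemma~\ref{L: d lambda} this latter quantity is $\e_\lambda d_\lambda$, and since $\e_\lambda^2=1$ we conclude $d_\lambda=\e_\lambda c^{(m)}_\lambda$.

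The only step requiring real care is the bijection in part (i): one must check that the assignment $\mu\mapsto(i_1,\ldots,i_j;A)$ is invertible and that the count $c_A$ correctly accounts for all compositions with the prescribed block pattern. Once this counting identity is in place, part (ii) is a clean inclusion--exclusion combined with the already-established formula for $d_\lambda$, so no further difficulty is expected.
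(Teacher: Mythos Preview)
Your proposal is correct and follows essentially the same approach as the paper: for part (i) you set up the same bijection between pairs $\bigl((i_1,\ldots,i_j),\mu\bigr)$ with $\mu\in P_{i_1}\cap\cdots\cap P_{i_j}$ and tuples of compositions indexed by $W_j(\lambda)$, obtained by cutting $\mu$ at the positions where its partial sums hit $mi_1,\ldots,mi_j$; for part (ii) you apply inclusion--exclusion to $\C(\lambda)\setminus\bigcup_i P_i$, invoke part (i), and compare with Lemma~\ref{L: d lambda}, exactly as the paper does. The only cosmetic slip is the phrase ``the assignment $\mu\mapsto(i_1,\ldots,i_j;A)$'': the source of your map is really the pair $\bigl((i_1,\ldots,i_j),\mu\bigr)$, since a single $\mu$ may lie in several of the intersections, but your surrounding discussion makes clear this is what you intend.
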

\begin{proof}  For the proof of part (i), we denote the disjoint union of sets $S_1,\ldots,S_a$ as $\bigsqcup_{i=1}^a S_i$. Let \[\psi:\bigsqcup_{1\leq i_1<\cdots<i_j\leq b} P_{i_1}\cap \cdots \cap P_{i_j}\to \bigsqcup_{(\delta^{(1)},\ldots,\delta^{(j)},\xi)\in W_j(\lambda)} \C(\delta^{(1)})\times \cdots\times \C(\delta^{(j)})\times \C(\xi)\] be defined as follows. For each $\eta\in P_{i_1}\cap \cdots \cap P_{i_j}$, we have positive integers $1\leq s_1<\cdots<s_j\leq \ell(\lambda)$ such that $\sum^{s_r}_{t=1}\eta_t=mi_r$ for each $1\leq r\leq j$. Therefore, for each $1\leq t\leq j+1$, \[\eta^{(t)}=(\eta_{s_{t-1}+1},\eta_{s_{t-1}+2},\ldots,\eta_{s_t})\in \C(\delta^{(t)})\] for some partition $\delta^{(t)}$ of size $m(i_t-i_{t-1})$ where $i_0=0=s_0$ and $s_{j+1}=\ell(\lambda)$. Let $\xi=\delta^{(j+1)}$. Then \[\psi(\eta)=\prod_{t=1}^{j+1}\eta^{(t)}\in \C(\delta^{(1)})\times \cdots\times \C(\delta^{(j)})\times \C(\xi)\] since $(\delta^{(1)},\ldots,\delta^{(j)},\xi)\in W_j(\lambda)$. The map $\psi$ is invertible with the inverse $\phi$ given as follows. For each $(\delta^{(1)},\ldots,\delta^{(j)},\xi)\in W_j(\lambda)$ and \[h=(\eta^{(1)},\ldots,\eta^{(j)},\eta^{(j+1)})\in \C(\delta^{(1)})\times \cdots\times \C(\delta^{(j)})\times \C(\xi),\] we define $\phi(h)=\eta^{(1)}\cont\cdots\cont\eta^{(j)}\cont\eta^{(j+1)}\in P_{i_1}\cap \cdots \cap P_{i_j}$ where $mi_r=\sum_{t=1}^r|\eta^{(t)}|$ for $1\leq r\leq j$. The bijection $\psi$ shows that \[\sum_{1\leq i_1<\cdots<i_j\leq b} |P_{i_1}\cap \cdots \cap P_{i_j}|=\sum_{(\delta^{(1)},\ldots,\delta^{(j)},\xi)\in W_j(\lambda)} |\C(\delta^{(1)})\times \cdots\times \C(\delta^{(j)})\times \C(\xi)|=\sum_{A\in W_j(\lambda)} c_A.\] The proof for part (i) is now complete.

For part (ii), by the inclusion-exclusion principle and Lemmas~\ref{L: d lambda} and part (i), we have
\begin{align*}
c_\lambda^{(m)}=\left |\bigcap_{i=1}^b (\C(\lambda)\backslash P_i)\right |&=|\C(\lambda)|-\sum_{j=1}^b (-1)^{j+1}\left ( \sum_{1\leq i_1<\cdots<i_j\leq b} |P_{i_1}\cap \cdots \cap P_{i_j}|\right )\\
&=|\C(\lambda)|-\sum_{j=1}^b (-1)^{j+1}\left (\sum_{A\in W_j(\lambda)} c_A\right )\\
&=-\left (\sum_{A\in W(\lambda)} (-1)^{\len{A}}c_A\right )\\
&=\e_\lambda d_\lambda,
\end{align*} i.e., $d_\lambda=\e_\lambda c^{(m)}_\lambda$.
\end{proof}

We may now prove our main theorem.

\begin{proof}[Proof of Theorem \ref{T: straight sym m}] Let $s=\ell(\beta)$. By Equation \ref{Eq: 2} and Lemma \ref{L: P and cA}(ii), we have \[e_\beta=\prod_{i=1}^se_{\beta_i}=\prod_{i=1}^s\left (\sum_{\xi^{(i)}\in\P(\beta_i;m)}\e_{\xi^{(i)}}c_{\xi^{(i)}}^{(m)}h_{\xi^{(i)}} e_{\beta_i-|\xi^{(i)}|}\right ).\] Notice that $\prod_{i=1}^sh_{\xi^{(i)}}e_{\beta_i-|\xi^{(i)}|}=h_\xi e_{m\mu}$ where $\xi=\xi^{(1)}\cup\cdots\cup\xi^{(s)}$, $m\mu=(\beta_1-|\xi^{(1)}|)\cup\cdots\cup (\beta_s-|\xi^{(s)}|)$ and hence $(\xi|m\mu)\in \P^2_m(|\beta|)$ since $\xi^{(i)}\in \P(\beta_i;m)$. So, for an arbitrary $(\xi|m\mu)\in \P^2_m(|\beta|)$, by Theorem \ref{T: Gamma ring}, the coefficient of $h_\xi e_{m\mu}$ in $e_\beta$ is precisely $\sum \prod_{i=1}^s\e_{\xi^{(i)}}c_{\xi^{(i)}}^{(m)}$ where the sum is taken over all $s$-tuples $(\xi^{(1)},\ldots,\xi^{(s)})\in \P(\beta_1;m)\times\cdots\times\P(\beta_s;m)$ such that $\xi=\xi^{(1)}\cup\cdots\cup\xi^{(s)}$ and $m\mu=(\beta_1-|\xi^{(1)}|)\cup\cdots\cup (\beta_s-|\xi^{(s)}|)$, i.e., over the set $V(\beta;(\xi,\mu))$. So the coefficient of $h_\xi e_{m\mu}$ in $e_\beta$ is $c^{(m)}_{\beta;(\xi,m\mu)}$.  Therefore,
\begin{align*}
  h_\alpha e_\beta=h_\alpha\sum_{(\xi|m\mu)\in\P^2_m(|\beta|)}c^{(m)}_{\beta;(\xi,m\mu)} h_\xi e_{m\mu}&=\sum_{(\xi|m\mu)\in\P^2_m(|\beta|)}c^{(m)}_{\beta;(\xi,m\mu)} h_{\alpha\cup\xi} e_{m\mu}\\
  &=\sum_{\substack{(\lambda|m\mu)\in\P^2_m(n),\\ \lambda=\xi\cup\alpha,\ \xi\in\P}} c^{(m)}_{\beta;(\xi,m\mu)} h_\lambda e_{m\mu}.
\end{align*}
%\[e_{\beta_i}=\sum_{\xi^{(i)}\in\P(\beta_i;m)}\e_{\xi^{(i)}}c_{\xi^{(i)}}^{(m)}h_{\xi^{(i)}} e_{\beta_i-|\xi^{(i)}|},\] and hence \[e_\beta=\prod_{i=1}^s\left (\sum_{\xi^{(i)}\in\P(\beta_i;m)}\e_{\xi^{(i)}}c_{\xi^{(i)}}^{(m)}h_{\xi^{(i)}} e_{\beta_i-|\xi^{(i)}|}\right ).\] For any $(\xi|m\mu)\in \P^2_m(|\beta|)$,
\end{proof}

Using Theorems \ref{T: Donkin decategorification} and \ref{T: straight sym m}, we obtain the following immediate corollaries.

\begin{cor}\label{C: decomposition of Young permutation} Let $(\alpha|\beta)\in\C^2(n)$. Then
\begin{enumerate}[(i)]
\item $[M(\alpha|\beta)]=\sum c^{(p)}_{\beta;(\xi,p\mu)} [M(\lambda|p\mu)]$, and
\item $[K^{\alpha|\beta}E]=\sum c^{(p)}_{\beta;(\xi,p\mu)} [K^{\lambda|p\mu}E]$
\end{enumerate} where both of the sums above run over all $(\lambda|p\mu)\in\P^2_p(n)$ such that $\lambda=\alpha\cup \xi$ for some partition $\xi$.
\end{cor}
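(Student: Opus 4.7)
The plan is entirely straightforward: the corollary is the categorified shadow of Theorem \ref{T: straight sym m} in the case $m=p$, so I would simply transport the identity from $\Gamma^{(p)}$ to $\Y(\sym{})$ and $\L(\Sch)$ via the graded ring isomorphisms supplied by Theorem \ref{T: Donkin decategorification}.

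Concretely, I would first specialise Theorem \ref{T: straight sym m} to $m=p$, which yields, in $\Gamma^{(p)}$, the identity
\[
h_\alpha e_\beta \;=\; \sum c^{(p)}_{\beta;(\xi,p\mu)}\, h_\lambda e_{p\mu},
\]
where the sum runs over $(\lambda|p\mu)\in\P^2_p(n)$ such that $\lambda=\alpha\cup\xi$ for some partition $\xi$. Next, since the coefficients $c^{(p)}_{\beta;(\xi,p\mu)}$ are integers and $\phi:\Gamma^{(p)}\to\Y(\sym{})$ is a $\Z$-linear (indeed, ring) isomorphism sending $h_\gamma e_\delta$ to $[M(\gamma|\delta)]$ by Theorem \ref{T: Donkin decategorification}, applying $\phi$ to both sides of the above identity immediately gives part (i). Analogously, applying $\theta:\Gamma^{(p)}\to\L(\Sch)$, which sends $h_\gamma e_\delta$ to $[K^{\gamma|\delta}E]$, produces part (ii).

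There is no real obstacle here; all of the substantive combinatorial work has been done in proving Theorem \ref{T: straight sym m}, and the corollary is essentially a restatement of that theorem on the two categorified sides of the commutative diagram in Theorem \ref{T: Donkin decategorification}. The only small thing worth remarking is that the bases $\CC=\{[M(\lambda|p\mu)]\}$ and $\D=\{[K^{\lambda|p\mu}E]\}$ correspond exactly to the basis $\B=\{h_\lambda e_{p\mu}\}$ of $\Gamma^{(p)}$ under $\phi$ and $\theta$ respectively, so the expansion in Theorem \ref{T: straight sym m} remains an expansion in the distinguished bases on both the symmetric group side and the Schur algebra side.
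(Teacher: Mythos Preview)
Your proposal is correct and matches the paper's own argument: the paper simply states that the corollary is immediate from Theorems \ref{T: Donkin decategorification} and \ref{T: straight sym m}, and your write-up spells out exactly this transport of the $m=p$ identity in $\Gamma^{(p)}$ through the isomorphisms $\phi$ and $\theta$.
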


%Corollary \ref{C: decomposition of Young permutation} allows us to rewrite a signed Young permutation module or a mixed power as a $\Z$-linear combination of the bases $\CC=\{[M(\lambda|p\mu)]:(\lambda|p\mu)\in\P^2_p\}$ and  $\D=\{[K^{\lambda|p\mu}E]:(\lambda|p\mu)\in\P^2_p\}$ in $\Y(\sym{})$ and $\L(\Sch)$ respectively. For example, as we have computed earlier, when $p=3$, using the maps $\theta$ and $\phi$ as in Theorem~\ref{T: Donkin decategorification}, we have
We demonstrate Corollary \ref{C: decomposition of Young permutation} with an example. When $p=3$, following Example \ref{Ex: 1}, using the maps $\theta$ and $\phi$ as in Theorem~\ref{T: Donkin decategorification}, we have
\begin{align*}
  [M(\varnothing|(2))]&=-[M((2)|\varnothing)]+[M((1,1)|\varnothing)],\\
  [M(\varnothing|(4))]&=-[M((4)|\varnothing)]+[M((3,1)|\varnothing)]+[M((2,2)|\varnothing)]-[M((2,1,1)|\varnothing)]+[M((1)|(3))],\\
  [\bigwedge\nolimits^2E]&=-[S^2E]+[S^{(1,1)}E],\\
  [\bigwedge\nolimits^4E]&=-[S^4E]+[S^{(3,1)}E]+[S^{(2,2)}E]-[S^{(2,1,1)}E]+[E\otimes \bigwedge\nolimits^3E].
\end{align*}

Let $(M(\alpha|\beta):Y(\delta|p\xi))$ denote the multiplicity of $Y(\delta|p\xi)$ as a direct summand of $M(\alpha|\beta)$. Similarly, we have the multiplicity $(K^{\alpha|\beta}E:\List^{\delta|p\xi}E)$. The following corollary is clear.

\begin{cor} Let $(\alpha|\beta)\in\C^2(n)$ and $(\delta|p\xi)\in\P^2_p(n)$. Then
\begin{enumerate}[(i)]
\item $(M(\alpha|\beta):Y(\delta|p\xi))=\sum c^{(p)}_{\beta;(\xi,p\mu)} (M(\lambda|p\mu):Y(\delta|p\xi))$, and
\item $(K^{\alpha|\beta}E:\List^{\delta|p\xi}E)=\sum c^{(p)}_{\beta;(\xi,p\mu)} (K^{\lambda|p\mu}E:\List^{\delta|p\xi}E)$
\end{enumerate} where both of the sums above run over all $(\lambda|p\mu)\in\P^2_p(n)$ such that $\lambda=\alpha\cup \xi$ for some partition $\xi$.
\end{cor}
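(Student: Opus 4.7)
The plan is a direct coefficient comparison from Corollary~\ref{C: decomposition of Young permutation}. By Theorem~\ref{T: symmetric ring}, the set $\{[Y(\delta|p\xi)]:(\delta|p\xi)\in\P^2_p\}$ is a $\Z$-basis of $\Y(\sym{})$, so for any $(\gamma|\eta)\in\C^2(n)$ there is a unique expansion
$$[M(\gamma|\eta)]=\sum_{(\delta|p\xi)\in\P^2_p(n)}\bigl(M(\gamma|\eta):Y(\delta|p\xi)\bigr)\,[Y(\delta|p\xi)].$$
First I would take the identity $[M(\alpha|\beta)]=\sum c^{(p)}_{\beta;(\zeta,p\mu)}[M(\lambda|p\mu)]$ from Corollary~\ref{C: decomposition of Young permutation}(i), with the sum running over $(\lambda|p\mu)\in\P^2_p(n)$ such that $\lambda=\alpha\cup\zeta$ for some partition $\zeta$, substitute each $[M(\lambda|p\mu)]$ on the right-hand side by its unique expansion into the signed Young module basis, and then equate the coefficient of a fixed $[Y(\delta|p\xi)]$ on either side. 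The linear independence of $\{[Y(\delta|p\xi)]:(\delta|p\xi)\in\P^2_p(n)\}$ then forces part (i).

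For part (ii) I would run exactly the same argument, using Theorem~\ref{T: Schur ring} in place of Theorem~\ref{T: symmetric ring} and Corollary~\ref{C: decomposition of Young permutation}(ii) in place of (i); alternatively, one may transport (i) along the graded ring isomorphism $f:\L(\Sch)\to\Y(\sym{})$ of Theorem~\ref{T: Donkin decategorification}, which sends $[K^{\alpha|\beta}E]\mapsto[M(\alpha|\beta)]$ and $[\List^{\lambda|p\mu}E]\mapsto[Y(\lambda|p\mu)]$, and hence matches multiplicities on both sides. There is no real obstacle here: the entire content is carried by Corollary~\ref{C: decomposition of Young permutation}, and the present statement simply reads off the corresponding multiplicities via a change-of-basis comparison, which is why the paper records the corollary as clear.
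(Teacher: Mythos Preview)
Your proposal is correct and is exactly the argument the paper has in mind; indeed, the paper gives no proof beyond recording the corollary as ``clear'' immediately after Corollary~\ref{C: decomposition of Young permutation}. Your introduction of the auxiliary letter $\zeta$ in place of $\xi$ for the summation variable also quietly resolves a notational clash present in the paper's statement, where $\xi$ is used both for the fixed second component of $(\delta|p\xi)$ and for the running partition with $\lambda=\alpha\cup\xi$.
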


Recall that $Y(\lambda|p\mu)$ (respectively, $\List^{\lambda|p\mu}E$) is a summand of $M(\lambda|p\mu)$ (respectively, $K^{\lambda|p\mu}E$) with multiplicity one and any other summand $Y(\alpha|p\beta)$ (respectively, $\List^{\alpha|p\beta}E$) necessarily satisfies $(\alpha|p\beta)\rhd (\lambda|p\mu)$. Applying our result, we have a similar description when $(\lambda|p\mu)$ is replaced by a general pair of compositions $(\alpha|\beta)$.

\begin{cor}\label{C: Young distinguished summand} Let $(\alpha|\beta)\in\C^2(n)$ and $s=\ell(\beta)$. For each $1\leq i\leq s$, let $\beta_i=p\eta_i+r_i$ such that $0\leq r_i\leq p-1$ and let $r=\sum_{i=1}^{s}r_i$. Suppose that $\alpha\cont(1^r)$ and $(\eta_1,\ldots,\eta_{s})$ have types $\lambda$ and $\mu$ respectively.
\begin{enumerate}[(i)]
\item The signed Young permutation module $M(\alpha|\beta)$ has the summand $Y(\lambda|p\mu)$ such that $(M(\alpha|\beta):Y(\lambda|p\mu))=1$ and, if $Y(\delta|p\theta)$ is a summand of $M(\alpha|\beta)$, then $(\delta|p\theta)\unrhd (\lambda|p\mu)$.
\item The mixed power $K^{\alpha|\beta}E$ has the summand $\List^{\lambda|p\mu}E$ such that $(K^{\alpha|\beta}E:\List^{\lambda|p\mu}E)=1$ and, if $\List^{\delta|p\theta}E$ is a summand of $K^{\alpha|\beta}E$, then $(\delta|p\theta)\unrhd (\lambda|p\mu)$.
\end{enumerate}
\end{cor}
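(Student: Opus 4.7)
The plan is to expand $[M(\alpha|\beta)]$ via Corollary~\ref{C: decomposition of Young permutation}(i) and then peel off signed Young constituents using Donkin's upper-triangular decomposition $[M(\nu|p\tau)] = [Y(\nu|p\tau)] + \sum_{(\delta|p\theta) \rhd (\nu|p\tau)} a^{\nu,\tau}_{\delta,\theta}\,[Y(\delta|p\theta)]$ with $a^{\nu,\tau}_{\delta,\theta} \in \NN_0$. The key claim is that the unique minimum in the dominance order $\unrhd$ among the $(\nu|p\tau) \in \P^2_p(n)$ carrying a nonzero coefficient in the expansion is $(\lambda|p\mu)$, and that this coefficient equals $1$; part (ii) follows identically from Corollary~\ref{C: decomposition of Young permutation}(ii) and the analogous triangularity for listing modules.

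First I would identify the distinguished term. The coefficient of $[M(\nu|p\tau)]$ equals $c^{(p)}_{\beta;(\xi,p\tau)}$ where $\nu = \alpha \cup \xi$; unpacking the definition, each contributing tuple $(\xi^{(1)},\ldots,\xi^{(s)}) \in V(\beta;(\xi,p\tau))$ has $|\xi^{(i)}| = r_i + pk_i$ for some $k_i \in \NN_0$. Since $c^{(p)}_\nu = 0$ whenever $\nu \neq \varnothing$ and $p \mid |\nu|$, every $\xi^{(i)}$ with $r_i = 0$ must be $\varnothing$. The canonical choice $\xi^{(i)} = (1^{r_i})$ (reading $(1^0) = \varnothing$) yields $\xi = (1^r)$, so $\nu = \alpha \cup (1^r)$ has type $\lambda$, while $\beta_i - |\xi^{(i)}| = p\eta_i$ makes $p\tau$ of type $p\mu$. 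The only composition of type $(1^{r_i})$ is $(1,\ldots,1)$, whose partial sums $1,\ldots,r_i$ all lie below $p$, so $c^{(p)}_{(1^{r_i})} = 1 = \e_{(1^{r_i})}$. Moreover, the constraint $\xi^{(1)} \cup \cdots \cup \xi^{(s)} = (1^r)$ forces each $\xi^{(i)} = (1^{a_i})$ with $\sum a_i = r = \sum r_i$ and $a_i \geq r_i$, hence $a_i = r_i$; this tuple is therefore unique and $c^{(p)}_{\beta;((1^r),p\mu)} = 1$.

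Next I would verify that every other contributing pair $(\nu|p\tau)$ strictly dominates $(\lambda|p\mu)$. With $|\nu| - |\lambda| = p \sum k_i \geq 0$ and $\tau$ of type $(\eta_i - k_i)_i$ (discarding zero parts), condition (a) of dominance holds because $\xi$ has all parts $\geq 1$ and size $\geq r$, so its partition partial sums majorise those of $(1^r)$, and disjoint union with $\alpha$ preserves partial-sum majorisation. For condition (b), after subtracting $|\lambda|$ and dividing by $p$, we need $\sum k_i \geq \sum_{i \leq \ell}\mu_i - \sum_{i \leq \ell}\tau_i$ for every $\ell$: picking an $\ell$-element subset $J \subseteq \{1,\ldots,s\}$ that realises $\sum_{i \leq \ell}\mu_i = \sum_{j \in J}\eta_j$, we obtain $\sum_{i \leq \ell}\tau_i \geq \sum_{j \in J}(\eta_j - k_j) \geq \sum_{i \leq \ell}\mu_i - \sum_j k_j$, as required. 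Equality of the two pairs at $\ell \to \infty$ forces $\sum k_i = 0$, and then equality in (a) for all finite $\ell$ forces $\xi = (1^r)$, returning to the distinguished tuple.

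Finally, Donkin's triangularity implies that $Y(\lambda|p\mu)$ can appear in $M(\nu|p\tau)$ only if $(\lambda|p\mu) \unrhd (\nu|p\tau)$; combined with $(\nu|p\tau) \unrhd (\lambda|p\mu)$ this forces $(\nu|p\tau) = (\lambda|p\mu)$, so $Y(\lambda|p\mu)$ occurs in $[M(\alpha|\beta)]$ only through the distinguished summand, with total multiplicity $1$. Every other signed Young summand $Y(\delta|p\theta)$ of $M(\alpha|\beta)$ arises from some $[M(\nu|p\tau)]$ in the expansion and satisfies $(\delta|p\theta) \unrhd (\nu|p\tau) \unrhd (\lambda|p\mu)$, proving (i); part (ii) is verbatim. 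The main subtle point is inequality (b), which is a \emph{lower} bound on the top-$\ell$ sum of the sorted tuple $(\eta_i - k_i)_i$ in terms of that of $(\eta_i)_i$---the index-set argument above delivers it cleanly.
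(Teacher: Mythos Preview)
Your proof is correct and follows the same overall line as the paper: expand $[M(\alpha|\beta)]$ via Corollary~\ref{C: decomposition of Young permutation}, show that the coefficient at $(\lambda|p\mu)$ is $1$ by checking $V(\beta;((1^r),p\mu))=\{((1^{r_1}),\ldots,(1^{r_s}))\}$, and finish using Donkin's upper-triangularity. The one genuine difference is in the intermediate control of the other terms. You prove directly that every $(\nu|p\tau)$ appearing with nonzero coefficient satisfies $(\nu|p\tau)\unrhd(\lambda|p\mu)$; your index-set argument for condition~(b) of the dominance order (bounding the top-$\ell$ sum of the sorted $(\eta_j-k_j)$ from below via any $\ell$-element subset) is quick and transparent. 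The paper instead establishes the componentwise inequality $\theta_i\leq\eta_i$ through a multiset contradiction argument, and then, rather than asserting $(\delta|p\theta)\unrhd(\lambda|p\mu)$, uses the \emph{reverse} relation $(\lambda|p\mu)\unrhd(\delta|p\theta)$ supplied by the extra hypothesis ``$Y(\lambda|p\mu)\mid M(\delta|p\theta)$'' to force $(\delta|p\theta)=(\lambda|p\mu)$. Your formulation has the advantage that the second clause of the corollary---that every summand $Y(\delta|p\theta)$ of $M(\alpha|\beta)$ satisfies $(\delta|p\theta)\unrhd(\lambda|p\mu)$---is an immediate consequence, whereas in the paper's reduction to statements~(a) and~(b) this clause is left somewhat implicit. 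One small point worth tightening: the claim that ``disjoint union with $\alpha$ preserves partial-sum majorisation'' is true but not a one-word fact; a sentence such as ``if the top $\ell$ parts of $\alpha\cup(1^r)$ use $a$ parts of $\alpha$ and $\ell-a$ ones, then the same $a$ parts of $\alpha$ together with the top $\ell-a$ parts of $\xi$ already give at least as much'' would make your condition~(a) step airtight.
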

\begin{proof} We shall only prove part (i). Part (ii) can be proved in the similar fashion or follows from part (i) using Theorem~\ref{T: Donkin decategorification}. Since $M(\alpha|\beta)\cong M(\zeta|\nu)$ if $\zeta,\nu$ have types $\alpha,\beta$ respectively, we may assume that $\alpha,\beta$ are partitions so that $\lambda=\alpha\cont(1^r)$ and $\mu=(\eta_1,\ldots,\eta_s)$ in the statement. Notice that $r=|\beta|-p|\mu|$ and $(\lambda|p\mu)\in\P^2_p(n)$. It suffices to prove that
\begin{enumerate}
  \item [(a)] the coefficient of $[M(\lambda|p\mu)]$ in $[M(\alpha|\beta)]$ is 1, i.e., $c_{\beta;((1^r),p\mu)}^{(p)}=1$, and
  \item [(b)] if $(\delta|p\theta)\in\P^2_p(n)$ such that $M(\delta|p\theta)$ has the summand $Y(\lambda|p\mu)$ and $[M(\delta|p\theta)]$ appears in the sum of $[M(\alpha|\beta)]$ in Corollary \ref{C: decomposition of Young permutation}(i) then $(\delta|p\theta)=(\lambda|p\mu)$.
\end{enumerate}

For each $0\leq d\leq p-1$, we have $\e_{(1^d)} c_{(1^d)}^{(p)}=1$. It is easy to check that \[V(\beta;((1^r),p\mu))=\{((1^{r_1}),\ldots,(1^{r_s}))\}\] and hence $c_{\beta;((1^r),p\mu)}^{(p)}=1$. This proves statement (a).

Suppose that $(\delta|p\theta)\in\P^2_p(n)$ and $[M(\delta|p\theta)]$ appears in the sum of $[M(\alpha|\beta)]$ in Corollary \ref{C: decomposition of Young permutation}(i), i.e., $c^{(p)}_{\beta;(\xi,p\theta)}\neq 0$ for some partition $\xi$ such that $\delta=\alpha\cup\xi$, $\xi=\xi^{(1)}\cup\cdots\cup\xi^{(s)}$, $\xi^{(i)}\in \P(\beta_i;p)$ for each $1\leq i\leq s$ and $p\theta=(\beta_1-|\xi^{(1)}|)\cup\cdots\cup (\beta_s-|\xi^{(s)}|)$. Since $|\xi^{(i)}|\geq r_i$ for each $1\leq i\leq s$, we have $|\xi|\geq r$. So $\theta=(\eta_1-\epsilon_1)\cup\cdots\cup(\eta_s-\epsilon_s)$ for some nonnegative integers $\epsilon_1,\ldots,\epsilon_s$. Next, we shall show that $\theta_i\leq \eta_i$ for all $1\leq i\leq s$. Suppose, on the contrary, that $\theta_i>\eta_i$ for some $i$. For any $1\leq k\leq i-1$, we have $\eta_i<\theta_i\leq \theta_k=\eta_\ell-\epsilon_\ell$ for some $\ell$. In this case, $1\leq \ell\leq i-1$. Therefore the multisets (maybe empty) $\{\theta_k:1\leq k\leq i-1\}$ and $\{\eta_k-\epsilon_k:1\leq k\leq i-1\}$ are identical. So $\theta_i=\eta_j-\epsilon_j$ for some $j\geq i$ and hence $\eta_i<\theta_i\leq \eta_j$ for some $j\geq i$. This contradicts to the fact that $(\eta_1,\ldots,\eta_s)$ is a partition. So $\theta_i\leq \eta_i$ for all $1\leq i\leq s$. Suppose further that $Y(\lambda|p\mu)$ is a summand of $M(\delta|p\theta)$. We have $(\lambda|p\mu)\unrhd (\delta|p\theta)$. In particular, \[|\alpha|+r=|\lambda|\geq |\delta|=|\alpha|+|\xi|.\] Hence $|\xi|=r$, $|\lambda|=|\delta|$ and $\alpha\cont (1^r)=\lambda\unrhd \delta=\alpha\cup\xi$ where, here, $\unrhd$ is the usual dominance order on $\P(n)$. Therefore, $\xi=(1^r)$. Furthermore, $p|\theta|=p|\mu|$ and hence we conclude that $\theta=\mu$.% This shows that any summand $Y(\delta|p\theta)$ of $M(\alpha|\beta)$ satisfies $(\delta|p\theta)\unrhd (\lambda|p\mu)$ and $(M(\alpha|\beta):Y(\lambda|p\mu))=1$.
\end{proof}

%In view of Theorem~\ref{T: straigthening}, one knows $[M(\alpha|\beta):Y(\lambda|p\mu)]$ if $[M(\delta|p\eta):Y(\lambda|p\mu)]$ are known for all $(\delta|p\eta)\in\P^2(n)$. Let $\geq$ be a total order refining the dominance order $\unrhd$ on $\P^2(n)$. We assume that if \[(\lambda|\mu)\geq (\alpha|\beta)\geq (\delta|\mu)\] then $\beta=\mu$. We define the signed $p$-Kostka matrix $\sgnK_{n,p}=(K_{(\alpha|p\beta),(\lambda|p\mu)})$ for $k\sym{n}$ where \[K_{(\alpha|p\beta),(\lambda|p\mu)}=[M(\alpha|p\beta):Y(\lambda|p\mu)]\] and both row and column are labeled by $\P^2(n)$ with respect to the total order $>$.

In \cite[Proposition 7.1]{GLOW}, Giannelli, O'Donovan, Wildon and the author found the label of the signed Young module when $M(\alpha|\beta)$ is indecomposable (see \cite[Theorem 1.4]{GLOW})in the odd characteristic case, i.e., the pair of partitions $(\delta|p\theta)$ such that $M(\alpha|\beta)\cong Y(\delta|p\theta)$. Direct application of Corollary~\ref{C: Young distinguished summand} gives an alternative proof.

\begin{cor}[{\cite[Proposition 7.1]{GLOW}}] Let $a,b\in\NN_0$, $a\neq 0$ and $b=sp+r$ where $0\leq r\leq p-1$. Then $M(\varnothing|(b))\cong Y((1^r)|p(s))$ and, if $p\mid a+b$, $M((a)|(b))\cong Y((a,1^r)|p(s))$. Similarly, we have $K^{\varnothing|(b)}E\cong \List^{(1^r)|p(s)}E$ and, if $p\mid a+b$, $K^{(a)|(b)}E\cong \List^{(a,1^r)|p(s)}E$.
\end{cor}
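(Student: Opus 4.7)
The plan is to combine Corollary~\ref{C: Young distinguished summand} with the known indecomposability of the modules in question. Corollary~\ref{C: Young distinguished summand} pins down the label of a multiplicity-one summand for any $(\alpha|\beta)\in\C^2(n)$ but does not by itself rule out other summands; once the module is known to be indecomposable, however, that distinguished summand must be the whole module.

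First I set up notation common to all four isomorphisms. Fix $\beta=(b)$ with $b=sp+r$ and $0\le r\le p-1$, so $\ell(\beta)=1$, $\eta_1=s$, $r_1=r$, and the aggregate $r$ appearing in Corollary~\ref{C: Young distinguished summand} equals $r_1$. The distinguished pair $(\lambda|p\mu)$ produced by that corollary is therefore $\lambda=\alpha\cont(1^r)$ (already a partition in both cases we need) and $\mu=(s)$. This specialises to $\lambda=(1^r)$, $\mu=(s)$ when $\alpha=\varnothing$, and to $\lambda=(a,1^r)$, $\mu=(s)$ when $\alpha=(a)$ with $a\ge1$.

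For $\alpha=\varnothing$ the module $M(\varnothing|(b))\cong\sgn(b)$ is one-dimensional and hence indecomposable, while $K^{\varnothing|(b)}E=\bigwedge^bE$ is a fundamental Weyl module and thus simple over $\Sch(\infty,b)$. Parts (i) and (ii) of Corollary~\ref{C: Young distinguished summand} then identify these unique summands as $Y((1^r)|p(s))$ and $\List^{(1^r)|p(s)}E$ respectively.

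For $\alpha=(a)$ with $p\mid a+b$, indecomposability of $M((a)|(b))$ is supplied by \cite[Theorem~1.4]{GLOW}. The Green ring isomorphism $f$ of Theorem~\ref{T: Donkin decategorification} sends $[K^{\alpha|\beta}E]$ to $[M(\alpha|\beta)]$ and $[\List^{\lambda|p\mu}E]$ to $[Y(\lambda|p\mu)]$, so the decompositions of $K^{(a)|(b)}E$ and $M((a)|(b))$ into listing modules and signed Young modules have matching multiplicities; indecomposability therefore transfers from $M((a)|(b))$ to $K^{(a)|(b)}E$. Corollary~\ref{C: Young distinguished summand} applied to the pair $((a)|(b))$ then identifies the unique summand in each case as $Y((a,1^r)|p(s))$ and $\List^{(a,1^r)|p(s)}E$ respectively.

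The main obstacle is the appeal to indecomposability: Corollary~\ref{C: Young distinguished summand} by itself only guarantees a multiplicity-one summand together with a dominance bound on other summands, so in the nontrivial case $\alpha=(a)$ the uniqueness of the summand must be supplied externally from \cite[Theorem~1.4]{GLOW}. Without this input the argument does not close; with it, the corollary becomes an immediate consequence of Corollary~\ref{C: Young distinguished summand}.
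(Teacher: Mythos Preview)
Your proof is correct and matches the paper's intended argument exactly: the paper gives no explicit proof, only the sentence ``Direct application of Corollary~\ref{C: Young distinguished summand} gives an alternative proof,'' having already recorded the indecomposability input \cite[Theorem~1.4]{GLOW} in the preamble to the corollary. You have faithfully unpacked this one line, correctly computing the distinguished label from Corollary~\ref{C: Young distinguished summand} in each case and supplying indecomposability (trivially for $\sgn(b)$ and $\bigwedge^bE$, and via \cite[Theorem~1.4]{GLOW} together with the Schur functor isomorphism of Theorem~\ref{T: Donkin decategorification} for the case $\alpha=(a)$).
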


\end{document}